\documentclass[11pt]{amsart}
\usepackage{amssymb,amsmath,amsthm,graphics,epsfig}
\usepackage{amsfonts}
\usepackage{graphicx}
\usepackage{amssymb}
\usepackage{epstopdf}

\usepackage{mathrsfs}
\usepackage[colorlinks=true, pdfstartview=FitV, linkcolor=blue, citecolor=blue, urlcolor=blue]{hyperref}

\usepackage{booktabs}
\usepackage{graphicx}
\usepackage{tikz}
\usetikzlibrary{calc,angles,quotes,arrows.meta}

\usepackage{subfigure}
\usepackage{amsmath}
\usepackage{amsthm}
\usepackage{multicol}
\usepackage{cases}
\usepackage{booktabs}
\usepackage{hyperref}

\newtheorem{theorem}{Theorem}[section]
\newtheorem{lemma}[theorem]{Lemma}
\newtheorem{proposition}[theorem]{Proposition}
\newtheorem{corollary}[theorem]{Corollary}
\newtheorem{definition}[theorem]{Definition}
\newtheorem{remark}{Remark}

\newenvironment{Lemma}{\begin{lemma} \begin{sl}}{\end{sl} \end{lemma}}
\newenvironment{Proposition}{\begin{proposition}
		\begin{sl}}{\end{sl} \end{proposition}}

\newenvironment{Proof}{\removelastskip \noindent {\bf Proof~:} }
{ \hspace*{\fill} {\bf q.e.d.} \bigskip \noindent}

\DeclareMathOperator{\z}{\mathbf{z}}

\begin{document}
	
	\title{A General Framework for Relative Equilibria in the Symmetric  Full Gravitational N-Body Problem}
	\author{F. Crespo$^{1,3}$, H.R.  Dullin$^{2}$}
	
	\address{$^{1}$Aerospace Engineering Department, Embry-Riddle Aeronautical University\\ 1 Aerospace Blvd, Daytona Beach, FL 32114, USA}
\email{crespocf@erau.edu}
	\address{
		$^{2}$University of Sydney, School of Mathematics and Statistics\\ Sydney, NSW 2006, Australia}
\email{holger.dullin@sydney.edu.au}
	\address{
		$^{3}$Sydney Mathematical Research Institute\\ Sydney, NSW 2006, Australia}
		 	\date{}
	\subjclass{Primary 70F15; Secondary 70F10, 70E55}
	
	\keywords{Celestial Mechanics; Hamiltonian Systems; Relative Equilibrium; Full Gravitational $N$-Body Problem  }

	\begin{abstract}
We present a method to determine admissible configurations that lead to relative equilibria in the full gravitational $N$-body problem. The method exploits the $\mathrm{SO}(3)$ symmetry by working with its fundamental invariants and does not rely on restrictive assumptions about mass distributions or on truncating the gravitational potential. As a result it yields necessary conditions that apply to arbitrary rigid-body configurations.
    
This work is presented in two parts. In the first (this) part we assume every body is axisymmetric to give a clear, pedagogical exposition of the approach; in a forthcoming paper we remove that restriction and treat general triaxial bodies (the method remains valid but the number of variables increases). As an illustration, we systematically recover all known relative-equilibrium families for the two-body problem consisting of one sphere and one axisymmetric body, confirm earlier results in the literature, and derive new necessary conditions for certain configurations, including additional constraints for the arrow and non-Lagrangian types. In the special case of a sphere and an axisymmetric body, we also obtain sufficient conditions for relative equilibria under a monotonicity assumption on the potential.
\end{abstract}
	
	\maketitle
	
\section{Introduction}
The full gravitational $N$-body problem (FG$N$BP) accounts for the coupled rotational and translational dynamics of $N$ massive rigid bodies under gravitational attraction. The general gravitational potential between extended rigid bodies involves integrals over the bodies that cannot in general be solved in closed form, see \eqref{eq:Potential} below. Because of this often special models are considered, which include assumptions, hypotheses, series truncation, or treating limiting cases by tailoring a specific potential function that approximates the forces acting among the bodies. For instance, the usual $N$-body problem is the limit case when bodies are considered as point masses. In this paper we consider the case of axially symmetric bodies. It turns out that under this assumption some progress can be made for the case with a general potential. Specifically we are going to derive necessary conditions for relative equilibria in such systems.

The search for determining conditions leading to relative equilibria of $N$ bodies with arbitrary mass distribution has been studied in the literature. For instance, \cite{Moeckel2017} showed that when $N\geq3$, relative equilibria are not energy minimizers. This result generalizes the same property for the case of point masses \cite{Moeckel1990}. In \cite{CrespoTurner2022}, the authors provided a Hamiltonian formulation of the FG$N$BP for arbitrary $N$ and derived the $SO(3)$-reduced equations of motion with the aim of characterizing relative equilibria. However, the reduced equations are highly nonlinear and involve a large number of variables, making it difficult to extract general families of relative equilibria. 




In this article we study the axially symmetric full $N$-body problem. By axially symmetric we mean that every body is axially symmetric (or even spherically symmetric). This means that in addition to the usual Galilean symmetry of the problem we introduce an additional $SO(2)^N$ symmetry: rotating each body about its symmetry axis. For this problem it is natural to use a spatial frame, as explained in \cite{Dullin2004}, and the $SO(2)^N$ reduced phase space is $(T^*(\mathbb R^3 \times S^2))^N$. As a result we can write the equations of motion of  the axially symmetric full $N$-body problem in terms of $4N$ vectors in $\mathbb{R}^3$: positions $\mathbf x_i$, linear momenta $\mathbf y_i$, symmetry axes $\mathbf a_i$, angular momenta $\mathbf l_i$ where $i = 1, \dots, N$. 

Translation reduction fixes the center of mass at zero and the conserved total linear momentum $\sum \mathbf y_i $ at zero as well. Translation reduction is effected by introducing Jacobi vectors $\mathbf q_i$ and canonically conjugate momenta $\mathbf p_i$, $i = 1, \dots, N-1$ on $T^*\mathbb R^{3(N-1)}$. The spherically symmetric bodies can be treated as point masses, so they don't have axes or angular momentum vectors. The remaining $m$ axially symmetric bodies in $(T^*S^2)^m$ with coordinates $\mathbf a_j, \mathbf l_j$ where $j= 1, \dots, m$ and a Poisson structure with Casimirs $\mathbf a_j \cdot \mathbf a_j = 1$ and $ \mathbf a_j \cdot \mathbf l_j = \kappa_j$.

Notice that the $SO(3)$ symmetry that leads the conservation of total angular momentum is not reduced, and so the $6(N-1)+6m$ first order differential equations have $2m$ Casimirs and the 3 components of the total angular momentum as conserved quantities. For the simplest non-trivial case $N=2$ and $m=1$ the dynamics thus takes place on a 7-dimensional manifold.  After reduction by the remaining rotation about the axis of total angular momentum the fully reduced phase space is 6 dimensional. 

The case $N=2$ has received a great deal of attention in the literature. The first analysis of the relative equilibria for a rigid body in a central force field date back to Lagrange \cite{Lagrange1780}. Since then, many authors have studied different realizations of the FG$N$BP from the analytic \cite{Scheeres2006,BOUE2009,Scheeres2012} and geometric points of view \cite{WangB,Maciejewski2010,Crespo2024}. However, their conclusions are attached to each of their particular models, while our treatment is not. 

For $N=2$, the relative equilibria are classified as follows: Lagrangian equilibria, all the bodies are moving in the same plane. Non-Lagrangian, each body in a different plane. They were discovered and rediscovered along the literature \cite{Abolenaga1979,Barkin1985,WangA}, as explained in a historical discussion in \cite{Maciejewski1995}. Among the Lagrangian type, Kinoshita used the MacCullagh's \cite{MacCullagh1840} approximation of the potential \cite{Kinoshita1970} and found three types of equilibria: spoke, arrow, and float. Later on, non-Lagrangian equilibria were reported in \cite{WangA,Maciejewski1995,Maciejewski2010}. Additionally, Scheeres studied piled-up equilibria, which occur when the bodies touch each other \cite{Scheeres2012}. All this configurations are considered in our analysis. Moreover, we specify additional necessary conditions not reported before for the arrow and non-Lagrangian types to exist.

In the applications presented in Section~\ref{sec:SphereAxSyBodyAnalysis}, we focus on the case of a sphere and an axisymmetric body. We confirm earlier results in the literature \cite{Kinoshita1970,Maciejewski2010} and derive new necessary conditions for certain configurations, including additional constraints for the arrow and non-Lagrangian types (see Cases~1 and 2 in Section~\ref{sec:SphereAxSyBodyAnalysis}). Finally, for the special case of a sphere interacting with an axisymmetric body, Section~\ref{sec:Sufficient} derives sufficient conditions for the existence of relative equilibria under a monotonicity assumption on the potential.

\subsection{Notation and conventions}
We will consider the following notation and conventions throughout the paper. As a general agreement, we employ the same symbol for denoting vectors and their corresponding norms with bold and regular styles, respectively. Additionally, we employ the usual identification of 3-dimensional real vectors and anti-symmetric matrices through the mapping
\begin{equation}\nonumber
		\mathbf x\in\mathbb R^3 \longrightarrow \widehat{\mathbf x}\in\mathfrak{so}(3), \quad \mathbf x=\left(
		\begin{array}{c}
			x \\
			y \\
			z \\
		\end{array}
		\right),\; \widehat{\mathbf x}= \left(
		\begin{array}{ccc}
			0 & -z & y \\
			z & 0 & -x \\
			-y & x & 0 \\
		\end{array}
		\right),
	\end{equation}
where $\widehat{\mathbf x}$ satisfies the usual relation with the vector product $\widehat{\mathbf x}\mathbf y=\mathbf x\widehat{\mathbf y}=\mathbf x\times \mathbf y$. Moreover, we consider the following basic rotation matrices associated with the Euler chart of $SO(3)$
\begin{equation}\nonumber
	R_1(\alpha)=\left(
	\begin{array}{ccc}
		1 & 0 & 0 \\
		0 & \cos \alpha & \sin\alpha \\
		0 & -\sin\alpha & \cos\alpha \\
	\end{array}
	\right),\qquad R_3(\alpha)=\left(
	\begin{array}{ccc}
		\cos \alpha & \sin\alpha&0 \\
		-\sin\alpha & \cos\alpha &0\\
		 0 & 0 &1\\
	\end{array}
	\right).
\end{equation}
They allow us to relate coordinates of a given vector $\mathbf x\in\mathbb R^3$ in the spatial ($\left[\mathbf x\right]_E$) and body ($\left[\mathbf x\right]_E$) frames. Precisely, we have the following spatial-to-body transformation
\begin{equation}\nonumber
	\left[\mathbf x\right]_B=R_3\left( \psi\right)R_1\left( \theta\right) R_3\left( \phi\right) \left[\mathbf x\right]_E.
\end{equation}
For short, we will refer to the spatial to body matrix as $R=R_3\left( \psi\right)R_1\left( \theta\right) R_3\left( \phi\right) $, and the body to the spatial matrix as $A=R^T$. In our development, we do not consider a specific chart for $SO(3)$ but redundant variables with a Poisson structure. Therefore, we will consider the matrix $A$ as a collection of vectors
\begin{equation}\nonumber
	A=\left[\mathbf a_1,\mathbf a_2,\mathbf a_3\right]
\end{equation}
where $\mathbf a_1,\mathbf a_2,\mathbf a_3$ are columns vectors, being $\mathbf a_i=\left[\mathbf b_i\right]_E $ the coordinates of the body $\mathbf b_i$ axis in the spatial frame.

\section{Equations of Motion}
Most part of this paper is devoted to the case of bodies with axial symmetry. However, we start deriving the equations of motion at its maximum generality and then we will specialize to the axial symmetric case.

\subsection{Equations of motion for $N$ triaxial bodies}
In this part, we provide the equations of motion for the full gravitational $N$-body problem following \cite{CrespoTurner2022} and \cite{Dullin2004}. In \cite{CrespoTurner2022}, the authors set this problem in the Hamiltonian formalism through a suitable Poisson structure \cite{Vaisman1994}. The approach in \cite{Dullin2004} differs from \cite{CrespoTurner2022} in the number of dimensions involved and in the fact that the equations are expressed in the spatial frame. Here, thinking in applications to axial-symmetric bodies, we also chose variables in the spatial frame.  

We assume that we have $N$ finite bodies with masses $(m_1,\ldots,m_N)$, interacting gravitationally. Hence, the phase space is $T^*\mathbb R^{3N}\times T^*SO(3)^N$. Precisely, we employ the following notation for the coordinates and conjugate momenta
\begin{equation}
\label{eq:Variables1}
\mathbf q=(\mathbf q_1,\dots,\mathbf q_N),\quad \mathbf p=(\mathbf p_1,\dots,\mathbf p_N),\quad \pmb{l}=({\pmb{l}}_1,\dots,{\pmb{l}}_N),\quad A=(A_1,\dots,A_N),
\end{equation}
where $A_i=\left[\mathbf a_{i1},\mathbf a_{i2},\mathbf a_{i3} \right]\in SO(3)$ are the attitude matrices with columns $ \mathbf a_{ij}\in \mathbb R^3 $, and ${\pmb{l}}_i\in\mathbb{R}^3$ are the spatial angular momenta. Moreover, $\mathbf q_i\in\mathbb{R}^3,\mathbf p_i\in\mathbb{R}^3$ are the Jacobi vectors and conjugate momenta. Now, we consider the inertia tensor for each body $\mathbb I_i=Diag(\Theta_{i1},\Theta_{i2},\Theta_{i3})$ in the body frame, and employ the notation $\mathbf z_1=\left[\mathbf q,\mathbf p,{\pmb{l}},A\right] $ to express the Hamiltonian of the system whose value is the total energy as
\begin{equation}
\label{eq:Hamiltonian1}
	\mathcal{H}_1(\mathbf z_1)=\dfrac{1}{2}\left[ \sum_{i=1}^{N}\frac{\Vert \mathbf p_i\Vert }{m_i}+\sum_{i=1}^N\left\langle A_i^T{\pmb{l}}_i,\mathbb I_i^{-1}A_i^T	{\pmb{l}}_i\right\rangle \right] +V(\mathbf q,A),
\end{equation}
with potential functions given by
\begin{gather}\label{eq:Potential}
	\begin{aligned}
		V(\mathbf q,A)&=-\mathcal{ G}\sum_{\stackrel{i=1}{i<j}}^{N}  \int_{\mathcal{B}_i}\int_{\mathcal{B}_j}\dfrac{dm(\mathbf{X}_i)dm(\mathbf{X}_j)}{\Vert\mathbf q_i-\mathbf q_j+A_i\cdot\mathbf{X}_i-A_j\cdot \mathbf{X}_j\Vert},
	\end{aligned}
\end{gather}
where $\mathcal{B}_k$ correspond with the $k$-th body, and  $\mathcal{ G}$ is the universal gravitational constant. Then,  the equations of motion are 
\begin{gather}
\label{eq:EqMotion1}
	\begin{aligned}
		&\dot {\mathbf q}_i=\dfrac{\mathbf p_i}{\nu_i},\qquad \dot {\mathbf p}_i=-\dfrac{\partial V}{\partial \mathbf q_i}(\mathbf q,A),\\
		& \dot {\mathbf a}_{ij}=\dfrac{\partial \mathcal{H}_1}{\partial {\pmb{l}}_i}\times \mathbf a_{ij},\qquad \dot{\pmb{l}}_i=-\dfrac{\partial \mathcal{H}_1}{\partial {\pmb{l}}_i}\times{\pmb{l}}_i-\sum_{j=1}^3\dfrac{\partial \mathcal{H}_1}{\partial \mathbf a_{ij}}\times \mathbf a_{ij}.
	\end{aligned}
\end{gather}
Denoting $\mathcal M_1:=T^*\mathbb R^{3N}\times T^*SO(3)^N$, the above equations are Hamiltonian in the Poisson manifold $(\mathcal M_1,\left\lbrace \,,\,\right\rbrace_1)$. That is to say, we may express equations \eqref{eq:EqMotion1} in the following compact form 
$$\dot{\mathbf z}_1=\lbrace \mathbf z_1,\mathcal H_1\rbrace_1,$$ 
where the Poisson structure is given as
\begin{equation}
\label{eq:PoissonStructure1}
	\left\lbrace f,g\right\rbrace_1=\nabla f^T\cdot J_1(\mathbf z_1)\cdot \nabla g 
\end{equation}
with the following structure matrix
\begin{equation}\nonumber
		J_1(\mathbf z)=\left[
		\begin{array}{cccccc}
			J_{S}  		   &0		 &\cdots&0 \\
			0		    		   &J_1({\pmb{l}}_1,A_1 )&0&\vdots\\
			\vdots    		 		   &     0			 &\ddots&0\\
			0				 		   &	\cdots 			 &		0	& J_N({\pmb{l}}_N,A_N)\\
		\end{array}
		\right],\quad
		J_{i}({\pmb{l}}_i,A_i)=\left[
\begin{array}{cccc}
	-\widehat{{\pmb{l}}}_i&-\widehat{\mathbf a}_{i1}&-\widehat{\mathbf a}_{i2}&-\widehat{\mathbf a}_{i3}\\
	-\widehat{\mathbf a}_{i1}& 0 & 0 & 0  \\
	-\widehat{\mathbf a}_{i2}& 0 & 0 & 0  \\
	-\widehat{\mathbf a}_{i3}& 0 & 0 & 0  \\
\end{array}
\right]
\end{equation}
where $J_{S} $ is the standard symplectic matrix of order $2N$. This Poisson structure is endowed with the following Casimirs
	\begin{gather}
    \label{eq:Casimirs}
		\begin{aligned}
			&c_{1i}=\left\langle {\mathbf a}_{i1},{\mathbf a}_{i1}\right\rangle,\ \ \ \  c_{2i}=\left\langle {\mathbf a}_{i1},{\mathbf a}_{i2}\right\rangle,\ \ \ \  c_{3i}=\left\langle{\mathbf a}_{i1},{\mathbf a}_{i3} \right\rangle,\\ 
			&c_{4i}=\left\langle {\mathbf a}_{i2},{\mathbf a}_{i2}\right\rangle,\ \ \ \ \ c_{5i}=\left\langle {\mathbf a}_{i2},{\mathbf a}_{i3}\right\rangle,\ \ \ \  c_{6i}=\left\langle{\mathbf a}_{i3},{\mathbf a}_{i3} \right\rangle.
		\end{aligned}
	\end{gather}

\begin{remark}
    From the definition of the Hamiltonian function $\mathcal H_1$ given by \eqref{eq:Hamiltonian1} and \eqref{eq:Potential}, it is clear that $\mathcal H_1$ is $O(3)$-invariant. This fact will be fundamental for the subsequent development. Surprisingly, the Hamiltonian equations of motion are not $O(3)$-invariant, but $SO(3)$-invariant. The $O(3)$-invariance fails for equations $\mathbf a_{ij}$ and $\pmb l_j$ due to the non-standard Poisson structure associated with the rotational part.
\end{remark}

\subsection{The case of axially symmetric and spherical bodies}
Here we restrict to the case in which we have $m$ axially symmetric bodies and $N-m$ spheres. As a consequence of this assumption, the number of degrees of freedom decreases considerably. Precisely, the attitude matrices and angular momenta associated with spherical bodies are no longer needed. Without loss of generality, we may assume that the spherical bodies correspond with the last $N-m$ indices. Therefore, we represent the phase space with fewer variables, which equals to replace $N$ by $m$ in the above formulae \eqref{eq:Variables1}, \eqref{eq:Hamiltonian1}, \eqref{eq:EqMotion1} and \eqref{eq:PoissonStructure1}. Moreover, we have an extra symmetry given by the group $SO(2)^m$ acting as we describe through $\Psi$
\begin{eqnarray}
\nonumber		\Psi: SO(2)^m\times M_1&\to& M_1\\
\nonumber		(g,\mathbf z_1)&\mapsto&\Psi(g,\mathbf z_1)=g*\mathbf z_1
\end{eqnarray}
where $g\in SO(2)^m$ is given by $g=(g_1,\dots,g_m)$, with $g_i\in SO(2)$, and the operation $(*)$ is defined as
\begin{gather}\nonumber
	\begin{aligned}
		&g*\mathbf z_1=\left[\mathbf q,\mathbf p,\pmb{l},g*A\right], \qquad g*A=\left[ g_i\cdot A_i\right]_{i=1,\dots,m}.
	\end{aligned}
\end{gather}
More in detail, we have that $g_i=R_{\mathbf a_{i3}}(\alpha_i)$ is the rotation matrix with axis $\mathbf a_{i3}$ and arbitrary angle $\alpha_i$. The invariants associated to the action $\Psi$ are $\mathbf q$, $\mathbf p$, $\pmb{l}$ and the unit vectors $\mathbf a_{i3}$ for $i=1,\ldots,m$, which are the third column of each matrix $A_i$. 

In what follows, we will drop the index 3 when we refer to the body axis $\mathbf a_{i3}$, and we introduce the notation $\mathbf a=(\mathbf a_1,\ldots,\mathbf a_m)$ for the collection of all axes. Therefore, the $SO(2)^m$-reduced phase space is given by $\mathcal M_2:=T^*\mathbb R^{3N}\times T^*(\mathbb S^2)^m$, and described with the variables $\mathbf z_2=\left[\mathbf q,\mathbf p,\pmb{l},\mathbf a\right]$. This manifold is endowed with the reduced Poisson bracket $\{\,,\,\}_2$ given by the following Poisson structure matrix
\begin{equation}
\label{eq:PoissonStructure2}
J_2(\mathbf z_2)=\left[
\begin{array}{cccccc}
	J_{S}  		   &0		 &\cdots&0 \\
	0		    		   &J_2(\pmb{l}_1,\mathbf a_1 )&0&\vdots\\
	\vdots    		 		   &     0			 &\ddots&0\\
	0				 		   &	\cdots 			 &		0	& J_2(\pmb{l}_m,\mathbf a_{m})\\
\end{array}
\right], \quad J_2(\pmb{l}_j,\mathbf  a_j)=\left[\begin{array}{cc}
	-\widehat{\pmb{l}}_j&-\widehat {\mathbf a}_{j}\\
	-\widehat {\mathbf a}_{j}&0
\end{array}\right],\; j=1,\dots,m,
\end{equation}
with Casimir functions $\left\langle {\mathbf a}_{i},{\mathbf a}_{i}\right\rangle = c_{i}$
and $\left\langle {\mathbf l}_{i},{\mathbf a}_{i}\right\rangle = L_{i3}$ 
where the subscript $i3$ indicates that this is the 3rd component of the angular momentum vector of body $i$ in the body frame.

As observed in \cite{Dullin2004} with the additional $SO(2)^m$ axial symmetry of $m$ bodies the rotational kinetic energy can be written in the space fixed frame and depends only on the length of the angular momentum of the body. We are briefly going to recall the argument for a single body, thus dropping the subscript $j$. Denote by $\mathbf L =  A^t \mathbf l$ the angular momentum vector in the body frame, where $A \in SO(3)$ is the rotation matrix that connects the spatial and the body frame, and $\mathbf l$ is the angular momentum in the spatial frame. Furthermore, denote the tensor of inertia of the body by $\mathbb I=Diag(\Theta_{1},\Theta_{1},\Theta_{3})$. Then we have
\[
   K = \frac12 \mathbf L^t \mathbb I^{-1} \mathbf L = \frac12 \mathbf l^t A \mathbb I^{-1} A^t \mathbf l ,
\]
so that the kinetic energy depends on $A$, and that is why usually the body frame is preferred. However, with axial symmetry we can write 
\[
\mathbb I^{-1} = \frac{1}{\Theta_1} I + \left( \frac{1}{\Theta_3} - \frac{1}{\Theta_1} \right) \mathbf e_z \mathbf e_z^t
\]
where $\mathbf e_z$ is the symmetry axis of the body in the body frame. Now define $\pmb  a = A \mathbf e_z$ as the symmetry axis of the body in the spatial frame, and the kinetic energy can be written as
\[
 K = \frac{1}{2 \Theta_1} \mathbf l^2 + \frac12  \left( \frac{1}{\Theta_3} - \frac{1}{\Theta_1} \right)  \langle \pmb a, \mathbf l\rangle^2 \,.
\]
Furthermore, the projection $\langle \mathbf L, \mathbf e_z \rangle = \langle \mathbf l, \pmb a \rangle$ is a constant of motion, so that up to a constant the kinetic energy is proportional to $\mathbf l^2$. The vector $\pmb a$ has length one, because $\mathbf e_z$ does, and the two vectors $(\pmb a, \mathbf l)$ come with a Lie-Poisson structure as derived in \cite{Dullin2004}. The constant of motion $\pmb a^2$ and $ \langle \mathbf l, \pmb a \rangle$ are Casimirs of this Poisson structure. Fixing the Casimirs to $1$ and $0$, respectively, we obtain a system on $T^* S^2$. The argument can be repeated for each body, and the kinetic energy becomes a sum depending on $\mathbf l_j^2$, $j = 1, \dots, m$.

Considering the above treatment of the kinetic energy, and keeping in mind that the potential function $V$ given in \eqref{eq:Potential} is invariant under the action $\Psi$, we have that the reduced Hamiltonian is given by the following function
\begin{equation}
\label{eq:Hamiltonian2}
	\mathcal{H}_2(\mathbf z_2)=\dfrac{1}{2}\left[ \sum_{i=1}^N\dfrac{\Vert \mathbf p_i\Vert^2}{m_i}+\sum_{j=1}^m\dfrac{\Vert \pmb{l}_j\Vert ^2}{\Theta_{j1}}\right] +V(\mathbf q,\mathbf a)+c
\end{equation}
where, by a slight abuse of notation, we denote the new potential function by the same symbol, and
$c=\sum_{j=1}^m\left(\dfrac{1}{\Theta_{j3}} -\dfrac{1}{\Theta_{j1}} \right)\langle \pmb{l}_j, \mathbf a_j\rangle^2$
is a constant term that does not affect the equations of motion $\dot {\mathbf z}_2=\{\mathbf z_2,\mathcal{H}_2\}_2$, or explicitly
\begin{gather}
\label{eq:EqMotion2}
	\begin{aligned}
		&\dot {\mathbf q}_i=\dfrac{\mathbf p_i}{\mu_i},\qquad \dot {\mathbf p}_i=-\dfrac{\partial V}{\partial \mathbf q_i}, \quad i=1, \dots, N\\
		& \dot {\mathbf a}_{j}=\dfrac{1}{\Theta_{j1}}\pmb {l}_j\times \mathbf a_{j},\qquad \dot{\pmb {l}}_j=\dfrac{\partial V}{\partial \mathbf a_j}\times \mathbf a_j, \quad j = 1, \dots, m \,.
	\end{aligned}
\end{gather}

These are the equations reduced by the body symmetries $SO(2)^m$. Notice that the $SO(3)$ symmetry of the equations has not been reduced, and the corresponding constant of motion is the total angular momentum
\[
\mathbf c = \sum_i^N \mathbf q_i \times \mathbf p_i + \sum_j^m \mathbf l_j \,.
\]

\begin{remark}
    The potential function for the case of axial symmetric bodies depends only on the variables $(\mathbf q,\mathbf a)$. More precisely, the dependence on $\mathbf{q}$ is given through the differences $\mathbf{q}_i-\mathbf{q}_j$ for $i,j\in\{1,\ldots,N\}$ .
\end{remark}

\section{Reduction of the translational symmetry}

The symmetry group of the FG$N$BP at its maximum generality is given by the translational and rotational symmetries associated with the Lie groups $\mathbb R^3$, $SO(3)$ and the Galilean boosts, see \cite{Dullin2004,CrespoTurner2022}. This section reproduces the translational symmetry following the process described in \cite{CrespoTurner2022}, where we refer to the reader for all the details. Precisely, the transition of the origin to the center of mass 
\begin{equation}
	\mathbf{\tilde{\mathbf q}}_i=\mathbf q_i-\mathbf q_{cm},\qquad \tilde{\mathbf p}_i=\mathbf p_i-\dfrac{m_i}{\textrm m}\sum_{j=1}^n m_j\dot{{\mathbf q}}_j,
\end{equation}
where $\mathbf q_{cm}$ is the centre of mass,  $m_s=m_1+\cdots +m_N$ is the total mass of the system, and we consider the notation $\tilde{\mathbf q}=(\tilde{\mathbf q}_1,\ldots,\tilde{\mathbf q}_N)$, $\tilde{\mathbf p}=(\tilde{\mathbf p}_1,\ldots,\tilde{\mathbf p}_N)$. In this set of variables, the orbital phase space restricts to a $6(N-1)$-dimensional vectorial subspace $M\subset T^*\mathbb R^{3N}$ defined by 
\begin{equation}
    \label{eq:M2Constrains}
    \sum_{i=1}^Nm_i\tilde{\mathbf q}_i=0,\qquad \sum_{i=1}^N\tilde{\mathbf p}_i=0.
\end{equation}
Thus, by this transformation, the phase space of the FG$N$BP is restricted to the manifold $\tilde{\mathcal{M}}_2={M}\times T^*(\mathbb S^2)^m$. The new Poisson structure $\{,\}_2$ associated with $\mathcal{M}_2$ is given in \cite{CrespoTurner2022}. Its Poisson matrix is obtained by replacing the standard Poisson matrix $J_S$ in \eqref{eq:PoissonStructure2} with $\mathbf J_{M} $, which is given by 
$$\mathbf J_{M} =\left[\begin{array}{cc}0 & \bar{M} \\-\bar{M} & 0\end{array}\right],\quad \bar{M}=\frac{1}{m_s}\left[\begin{array}{ccccc}\bar{m}_s-\bar{m}_1 & -\bar m_2 & \cdots & \cdots & -\bar m_N \\-\bar m_1 & \bar{m}_s-\bar m_2 & -\bar m_3 & \cdots & -\bar m_N \\\vdots & \vdots & \vdots & \vdots & \vdots \\-\bar m_1 & \cdots & \cdots & -\bar{m}_{N-1} & \bar{m}_s-\bar m_N\end{array}\right],$$
where $\bar{M}$ is a $3N\times 3N$ matrix, whose entries $\bar{m}_{x}$ denotes a $3\times 3$ matrix given by ${m}_{x} I_3$, and $I_3$ is the 3-dimensional identity matrix.

Moreover, after some algebraic manipulations and dropping constant terms, the Hamiltonian in the new variables retains its original shape 
\begin{equation}
\label{eq:Hamiltonian2Tilde}
	\mathcal{H}_2(\tilde{\mathbf z}_2)=\dfrac{1}{2}\left[ \sum_{i=1}^N\dfrac{\Vert \tilde{\mathbf p}_i\Vert^2}{m_i}+\sum_{j=1}^m\dfrac{\Vert \pmb{l}_j\Vert ^2}{\Theta_{j1}}\right] +V(\tilde{\mathbf q},\mathbf a)+c,
\end{equation}
where $\tilde{\mathbf z}_2=\left[\tilde{\mathbf q},\tilde{\mathbf p},\pmb{l},\mathbf a\right]$, and the expression of the potential function $V$ defined in \eqref{eq:Potential} remains unchanged for the variables referred to the center of mass. Thus, the equations of motion become
\begin{gather}
\begin{aligned}
\label{eq:EquationOfMotionBoosts}
	 &\dot{\tilde{\mathbf q}}_i=\frac{\tilde{\mathbf p}_i}{m_i},\quad \dot{\tilde{\mathbf p}}_i =\dfrac{1}{m_s}\left[(m_i-m_s)\dfrac{\partial V}{\partial \tilde{\mathbf q}_i} +m_i \sum_{i\neq j}\dfrac{\partial V}{\partial \tilde{\mathbf q}_j}\right], \qquad i=1, \dots, N\\
		& \dot {\mathbf a}_{j}=\dfrac{1}{\Theta_{j1}}\pmb {l}_j\times \mathbf a_{j},\quad \dot{\pmb {l}}_j=\dfrac{\partial V}{\partial \mathbf a_j}\times \mathbf a_j, \qquad j = 1, \dots, m \,.
\end{aligned}
\end{gather}
Notice that the variables $(\tilde{\mathbf q},\tilde{\mathbf p})$ are redundant since \eqref{eq:M2Constrains} must hold. Therefore, the transition of the origin to the centre of mass has decreased the degrees of freedom by three. In this situation, barycentric, or Jacobi coordinates as well as relative coordinates are usually employed. For our purposes, relative coordinates are more suitable since Jacobi coordinates bring a highly complex expression for the Hamiltonian, see \cite{Meyer} page 199.
\begin{equation}
	{\mathbf q}_{ij}=\tilde{\mathbf q}_i-\tilde{\mathbf q}_j\qquad \mathbf p_{ij}=\frac{1}{N}(\tilde{\mathbf p}_i-\tilde{\mathbf p}_{j}).\nonumber
\end{equation}
Following \cite{CrespoTurner2022}, this process is executed by means of the following linear map
$$\tilde\Phi:T^*\mathbb{R}^{3N}\to T^*\mathbb R^{\frac{3N(N-1)}{2}},\quad (\tilde{\mathbf q},\tilde{\mathbf p})\to \tilde\Phi(\tilde{\mathbf q},\tilde{\mathbf p})=({\mathbf Q},{\mathbf P}),$$
where ${\mathbf Q}:=({\mathbf q}_{12},\ldots,{\mathbf q}_{ij},\ldots,{\mathbf q}_{N-1\,N})$ and ${\mathbf P}:=({\mathbf p}_{12},\ldots,{\mathbf p}_{ij},\ldots,{\mathbf p}_{N-1\,N})$, with $1\leq i<j\leq N$. Now, we consider $\hat{\Phi}=\tilde\Phi_{|{M}}$, which is the restriction of $\tilde\Phi$ to the subspace ${M}$. Note that the image of this map ${T}:=\hat \Phi({M})$ is a $6(N-1)$-dimensional subspace in $T^*\mathbb R^{\frac{3N(N-1)}{2}}$, which is given by the following linear restrictions
\begin{equation}
	 T=\left\lbrace ({\mathbf Q},{\mathbf P})\in T^*\mathbb R^{\frac{3N(N-1)}{2}}: {\mathbf q}_{ij}+{\mathbf q}_{jk}-{\mathbf q}_{ik}=0\,\wedge \,  {\mathbf p}_{ij}+{\mathbf p}_{jk}-{\mathbf p}_{ik}=0  \right\rbrace.
\end{equation}
The map $\hat\Phi$ is a linear isomorphism between ${M}$ and ${T}$. Adding the identity on $T^*SO(3)^N$, $\hat\Phi$ may be extended to a transformation $\Phi$ between $\tilde{\mathcal{M}}_2$ and $\mathcal{M}_3= T\times T^*(\mathbb S^2)^m$. Proceeding in the same way as in the previous section, we compute the new Poisson bracket $\{,\}_3$ defined in $C^\infty(T^*\mathbb R^{\frac{3N(N-1)}{2}}\times \mathbb R^{6N})$ as
	\begin{equation}
	\label{eq:J3}
		\mathbf J_3(\z)=\left[
\begin{array}{cccccc}
	J_{T}  		   &0		 &\cdots&0 \\
	0		    		   &J_2(\pmb{l}_1,\mathbf a_1 )&0&\vdots\\
	\vdots    		 		   &     0			 &\ddots&0\\
	0				 		   &	\cdots 			 &		0	& J_2(\pmb{l}_m,\mathbf a_{m})\\
\end{array}
\right],
	\end{equation}
where ${\mathbf z}=\left[{\mathbf Q},{\mathbf P},\pmb{l},\mathbf a\right]$, and  $\mathbf J_{T} $ is a $3N(N-1)\times 3N(N-1)$ matrix given by 
\begin{equation}
\label{eq:MatrixJT}
\mathbf J_{T} =\left[\begin{array}{cc}0 & \bar{T}  \\-\bar{T} & 0 \end{array}\right].
\end{equation}
The block $\bar{T}$ is a $(\frac{3N(N-1)}{2})\times (\frac{3N(N-1)}{2})$ symmetric matrix. In order to specify the components of $\bar{T}$, we organize its entries by $(3\times 3)$-diagonal matrices denoted by $\bar{t}_{(ij),(lk)}$, which are obtained from the bracket of the components of ${\mathbf q}_{ij}$ and ${\mathbf p}_{lk}$. Thus, we have $\bar{t}_{(ij),(lk)} ={t}_{(ij),(lk)} I_3$, where $I_3$ is the 3-dimensional identity matrix and the scalars ${t}_{(ij),(lk)}$ satisfy 
$${t}_{(ij),(ij)}=\frac{2}{N},\quad {t}_{(ij),(jk)}= {t}_{(ij),(li)}=\frac{-1}{N},\quad {t}_{(ij),(lj)}={t}_{(ij),(ik)}=\frac{1}{N},$$
and ${t}_{(ij),(lk)}=0$ otherwise. For instance, for the case $N=4$ we have 
$$ \bar T=\frac{1}{N}	\left[
\begin{array}{rrrrrr}
 2  I_3 &I_3 & I_3 &- I_3 &- I_3 &0_3 \\
 I_3& 2  I_3 & I_3& I_3&0_3&- I_3\\
 I_3& I_3& 2  I_3 &0_3& I_3& I_3\\
- I_3& I_3&0_3& 2  I_3 & I_3&- I_3\\
- I_3&0_3& I_3& I_3& 2  I_3 & I_3\\
0_3&- I_3& I_3&- I_3& I_3& 2  I_3 \\
\end{array}
\right].$$
A straightforward computation shows that the Poisson structure $\{,\}_3$ is endowed with the Casimirs $\left\langle {\mathbf a}_{i},{\mathbf a}_{i}\right\rangle = c_{i}$
and $\left\langle {\mathbf l}_{i},{\mathbf a}_{i}\right\rangle = L_{i3}$ together with
\begin{gather}
\begin{aligned}
\label{eq:CasimirsTranslational}
c_{q(ij),(jk)}= {\mathbf q}_{ij}+{\mathbf q}_{jk}-{\mathbf q}_{ik},\quad  c_{p(ij),(jk)}= {\mathbf p}_{ij}+{\mathbf p}_{jk}-{\mathbf p}_{ik}.
\end{aligned}
\end{gather}
The manifold $\mathcal{M}_3$ is obtained as the $\mathbb{Z}_2^N$-reduced symplectic leaf corresponding with fixed values for $L_{i3}$,  $c_{i}=1$, and $c_{q(ij),(jk)}= c_{p(ij),(jk)}= 0$, and the expression of the Hamiltonian in these variables is easily found as  $\mathcal{ H}_3(\mathbf{z}):=T_3(\mathbf{z})+V(\mathbf{z})$, where
\begin{gather}
\begin{aligned}
	\label{eq:T3U3}
	T_3(\mathbf{z})=&\frac{1}{2}\left[\sum_{i=1}^N\frac{\left\| \sum_{j\neq i} {\mathbf p}_{ij}\right\|  ^2}{m_i}+\sum_{j=1}^m\dfrac{\Vert \pmb{l}_j\Vert ^2}{\Theta_{j1}}  \right],\\
	V(\mathbf z)=&-\mathcal{ G}\sum_{\stackrel{i=1}{i<j}}^{N}  \int_{\mathcal{B}_i}\int_{\mathcal{B}_j}\dfrac{dm(\mathbf{X}_i)dm(\mathbf{X}_j)}{\Vert\mathbf q_{ij}+A_i\cdot\mathbf{X}_i-A_j\cdot \mathbf{X}_j\Vert}.
\end{aligned}
\end{gather}

\begin{remark}
    Notice that the expression of the potential remains unchanged throughout the development because it is written in relative coordinates from the outset. This feature represents a significant advantage over Jacobi coordinates, which produce very complex expressions for the potential function \cite{Meyer}.
\end{remark}

Finally, the Hamilton-Poisson equations of motion are explicitly given as 
\begin{gather}
	\begin{aligned}
		&\dot{\mathbf p}_{ij}=-\frac{\partial V}{\partial {\mathbf q}_{ij}},\qquad \dot{\mathbf q}_{ij}=\frac{1}{m_i}\sum_{k\neq i}\mathbf p_{ik}+\frac{1}{m_j}\sum_{k\neq j}\mathbf p_{kj},\\
		& \dot {\mathbf a}_{j}=\dfrac{1}{\Theta_{j1}}\pmb {l}_j\times \mathbf a_{j},\quad \dot{\pmb {l}}_j=\dfrac{\partial V}{\partial \mathbf a_j}\times \mathbf a_j, \qquad j = 1, \dots, m \,.
	\end{aligned}
\label{eq:EquationOfMotionReduced}
\end{gather}

Instead of performing a further symmetry reduction of the equations, in the following section we seek relative equilibria using the equations in their current form. Note also that, unlike the usual formulation in rigid body dynamics, the angular momentum and the axis of each body are described here by vectors in the spatial frame, rather than in a frame rotating with the body.

\section{Relative equilibria. Properties and equations}
\label{sec:RE}
We study the relative equilibria of system \eqref{eq:EquationOfMotionReduced} associated with the remaining $SO(3)$ symmetry. In this section we first discuss several general properties and derive the equations characterizing relative equilibria in a convenient form.

Before determining the conditions for relative equilibria, we recall some key facts and introduce notation that will be used throughout the analysis.

In particular, we consider the constrains imposed by the translational Casimirs \eqref{eq:CasimirsTranslational}
\begin{equation}
    \label{eq:CasimirsTranslational0}
    \mathbf q_{ij}+\mathbf q_{jk}-\mathbf q_{ik}=0,\qquad
\mathbf p_{ij}+\mathbf p_{jk}-\mathbf p_{ik}=0.
\end{equation}

They play a crucial practical role in reducing the number of independent variables. These linear relations show that all relative position and momentum vectors are algebraically determined by the $(N-1)$ vectors that connect body~1 to the others. Concretely one convenient choice of a \emph{fundamental} set is
\[
\bar{\mathbf Q}:=(\mathbf q_{12},\mathbf q_{13},\dots,\mathbf q_{1N}),\qquad
\bar{\mathbf P}:=(\mathbf p_{12},\mathbf p_{13},\dots,\mathbf p_{1N}),
\]
from which every other relative vector is recovered by the linear identities
\begin{equation}
\label{eq:recover}
\mathbf q_{ij}=\mathbf q_{1j}-\mathbf q_{1i},\qquad
\mathbf p_{ij}=\mathbf p_{1j}-\mathbf p_{1i},
\end{equation}
which are equivalent to the Casimir relations above. Depending on the calculation it is often advantageous to switch between the two descriptions: the potential $V$ is most simply written using the full set ${\mathbf Q}$, whereas many equilibrium conditions and linear-algebra manipulations are more compact when written in terms of the fundamental sets $\bar{\mathbf Q},\bar{\mathbf P}$. We will therefore adopt the notation \(\bar{\mathbf Q},\bar{\mathbf P}\) whenever we reduce to the minimal set of translational variables, and otherwise retain the full collection \(\mathbf Q,{\mathbf P}\) when that proves clearer for expressing $V$ or its derivatives.
\medskip

\subsection{General properties}
Relative equilibria are simultaneously solutions of the reduced system \eqref{eq:EquationOfMotionReduced} and orbits of the $SO(3)$-action. Therefore, assuming without loss of generality that $\pmb{\omega}=\omega \mathbf e_z$, we obtain that relative equilibria have the following form:
\begin{gather}
\label{eq:Ansatz}
	\begin{aligned}
		& \mathbf q_{ij}(t)=e^{\widehat{\pmb \omega}t}\mathbf q_{ij0},\qquad \mathbf p_{ij}(t)=e^{\widehat{\pmb \omega}t}\mathbf p_{ij0},\qquad \mathbf a_k(t)=e^{\widehat{\pmb \omega}t}\mathbf a_{k0},\qquad \pmb{l}_k(t)=e^{\widehat{\pmb \omega}t}\pmb {l}_{k0} \,.
	\end{aligned}
\end{gather}
Hence, the derivatives of $\mathbf q_{ij}$, $\mathbf p_{ij}$, $\mathbf a_k$ and $\pmb{l}_k$ all satisfy the differential equation
\begin{equation}
    \label{eq:Derivative}
    \dot{\pmb{x}}_\sigma=\pmb{\omega} \times\pmb{x}_\sigma.
\end{equation}

In this situation, the body-to-spatial coordinates matrix $A_i$ is composed of three column vectors: $\mathbf a_i$ and two arbitrary orthonormal vectors lying in the plane orthogonal to $\mathbf a_i$. Hence, the following proposition holds.

\begin{proposition} 
\label{propo:RE}
In a relative equilibria configuration, the $i$-th body $\pmb\omega_i=\pmb\omega$ and $\pmb\Omega_i=\omega\,\mathbf a_{i0}^T$, where $\mathbf a_{i0}^T$ denotes the third column of the matrix $A_{i0}^T$, and $\pmb\omega_i$ and $\pmb\Omega_i$ denote the angular velocity of the $i$-body in the spatial and body frames, respectively. 
\end{proposition}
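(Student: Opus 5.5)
The plan is to read off the angular velocity of each body directly from the attitude matrix along the relative equilibrium. By the ansatz \eqref{eq:Ansatz}, the axis evolves as $\mathbf a_i(t)=e^{\widehat{\pmb\omega}t}\mathbf a_{i0}$. Since the $SO(2)^m$ rotation about $\mathbf a_i$ has been quotiented out, the two other columns of $A_i$ are arbitrary orthonormal vectors in the plane orthogonal to $\mathbf a_i$ (as noted just before the statement). We may therefore choose a lift of the reduced trajectory to $SO(3)$ of the form
\[
A_i(t)=e^{\widehat{\pmb\omega}t}A_{i0},
\]
with $A_{i0}$ any rotation whose third column is $\mathbf a_{i0}$. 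This lift is consistent with the $SO(3)$-orbit structure of a relative equilibrium, since the whole configuration is rigidly rotated by $e^{\widehat{\pmb\omega}t}$.

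First I would compute the spatial angular velocity, using the standard definition $\widehat{\pmb\omega}_i=\dot A_iA_i^T$. This gives
\[
\dot A_iA_i^T=\widehat{\pmb\omega}e^{\widehat{\pmb\omega}t}A_{i0}A_{i0}^Te^{-\widehat{\pmb\omega}t}=\widehat{\pmb\omega},
\]
hence $\pmb\omega_i=\pmb\omega$.

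Second, I would compute the body angular velocity, $\widehat{\pmb\Omega}_i=A_i^T\dot A_i$, or equivalently $\pmb\Omega_i=A_i^T\pmb\omega_i$. This yields
\[
\pmb\Omega_i=A_{i0}^Te^{-\widehat{\pmb\omega}t}\omega\mathbf e_z=\omega A_{i0}^T\mathbf e_z,
\]
because $e^{\widehat{\pmb\omega}t}$ fixes $\mathbf e_z$. Now $A_{i0}^T\mathbf e_z$ is the third column of $A_{i0}^T$, which is exactly the vector denoted $\mathbf a_{i0}^T$ in the statement. So $\pmb\Omega_i=\omega\,\mathbf a_{i0}^T$, and it is constant in time.

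The main obstacle is justifying the lift. The reduced equations only fix $\mathbf a_i$ and $\pmb l_i$, so the true attitude could differ from the lift by an extra spin about $\mathbf a_i$, driven by the Casimir $L_{i3}$, which would add a term $\dot\alpha_i\mathbf a_i$ to $\pmb\omega_i$. I would handle this in one of two ways. Either I take the statement in the $SO(2)^m$-reduced sense, where the spin about the symmetry axis is modded out and the Casimir is set to $0$ (as in the derivation of \eqref{eq:Hamiltonian2}). Or I check consistency with \eqref{eq:EquationOfMotionReduced}: there $\dot{\mathbf a}_i=\Theta_{i1}^{-1}\pmb l_i\times\mathbf a_i$ must equal $\pmb\omega\times\mathbf a_i$, so $\pmb l_i/\Theta_{i1}-\pmb\omega$ is parallel to $\mathbf a_i$. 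This confirms that $\pmb\omega$ is the angular velocity of each body up to the symmetry-axis component, which is the component invisible to the reduced description.
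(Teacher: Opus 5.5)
Your proof is correct and is essentially the paper's: both use the rigid lift $A_i(t)=e^{\widehat{\pmb\omega}t}A_{i0}$ and the same conjugation computation. You go through $\dot A_iA_i^T=\widehat{\pmb\omega}$ and then $\pmb\Omega_i=A_i^T\pmb\omega_i$, which also makes explicit the claim $\pmb\omega_i=\pmb\omega$ that the paper does not spell out. Your caveat about the lift is well-founded, but the value of the Casimir that makes the rigid lift consistent is $L_{i3}=\Theta_{i3}\,\omega\, w_i$, which is exactly what the paper uses in Theorem~\ref{theorem:Cond}(i). It is not $L_{i3}=0$: when $w_i\neq 0$, that value would force an extra spin $\dot\alpha_i=-\omega w_i$ about $\mathbf a_i$.
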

\begin{proof}
We can compute the angular velocity as follows
$$\widehat{\pmb\Omega}_i=A_i^T \dot A_i=(A_{i0}^T \,e^{-\widehat{\pmb \omega}t})(\widehat{\pmb \omega} \,e^{\widehat{\pmb \omega}t} A_{i0})=A_{i0}^T \,\widehat{\pmb \omega} \,A_{i0}=\widehat{A_{i0}^T \,{\pmb \omega}}.$$
Hence, we have that $\pmb\Omega_i=A_{i0}^T\, \pmb \omega=\omega\,\mathbf a_{i0}^T$. 
\end{proof}

Employing the characterization of relative equilibria \eqref{eq:Ansatz}, next we will seek necessary conditions leading to relative equilibria solutions. In \cite{CrespoTurner2022}, the authors reduced the $SO(3)$ rotational symmetry and obtained the reduced equations of motion. However, these equations involved high complexity and were not very fruitful in characterizing conditions for relative equilibria. Here, we employ a different methodology studying conditions for \eqref{eq:Ansatz} to satisfy system \eqref{eq:EqMotion2}. Previous to this task, we introduce convenient notation and present several general properties associated with the relative equilibria.

We denote the initial conditions associated with a relative equilibria as $\mathbf z_{0}=(\mathbf Q_{0},\mathbf P_{0},\pmb {l}_{0},\mathbf a_{0})$. However, for the sake of a simpler notation, we will drop the subindex $0$ in the initial conditions components 
$$(\mathbf q_{ij0},\mathbf p_{ij0},\pmb {l}_{k0},\mathbf a_{k0})\to(\mathbf q_{ij},\mathbf p_{ij},\pmb {l}_{k},\mathbf a_{k}),$$
and we employ the following notation $\mathbf q_{ij}=(x_{ij},y_{ij},z_{ij})$ and $\mathbf a_{k}=(u_k,v_k,w_k)$.

Now, by plugging in the ansatz \eqref{eq:Ansatz} into the reduced system \eqref{eq:EquationOfMotionReduced}, we will obtain relations among the initial conditions determining conditions for the existence of relative equilibria.

\begin{theorem}
\label{theorem:Cond}
Let $\mathbf z_{0}=(\mathbf Q_{0},\mathbf P_{0},\pmb {l}_{0},\mathbf a_{0})$ be an initial condition of system \eqref{eq:EquationOfMotionReduced} with time evolution given by \eqref{eq:Ansatz}. The following relations for each component of $\mathbf z_{0}$ determine the relative equilibria conditions.
\begin{description}
  \item[(i) momenta determined by coordinates] 
   \begin{gather}
\begin{aligned}
\label{eq:REmomenta}
	{\mathbf p}_{ij}&=\pmb\omega\times\left[\dfrac{m_i}{N\,m_s}\sum_{k\neq i}{m_k\,\mathbf q}_{ik}+\dfrac{m_j}{N\,m_s}\sum_{k\neq j}{m_k\,\mathbf q}_{kj}\right],\\
	\pmb{l}_j&=\Theta_{j1}\pmb\omega+k_jw_j\mathbf a_j.
\end{aligned}
\end{gather}
where the factor multiplying $\mathbf{a}_j$ is either  $k_j=\omega \left( \Theta_{j3}-{\Theta_{j1}}\right)$ or $k_j=0$.
  \item[(ii) equations for the coordinates]
  \begin{gather}
\begin{aligned}
\label{eq:RE}
	&\dfrac{1}{N}\,\widehat{\pmb\omega}^2\left[\dfrac{m_i}{m_s}\sum_{k\neq i}{m_k\,\mathbf q}_{ik}+\dfrac{m_j}{m_s}\sum_{k\neq j}{m_k\,\mathbf q}_{kj}\right]=-\dfrac{\partial V}{\partial \mathbf q_{ij}}, \\
	&\mu_j\mathbf a_j=k_jw_j\, \pmb\omega-\frac{\partial V}{\partial \mathbf a_j},\qquad 
\end{aligned}
\end{gather}
\end{description}
where $i=1,\ldots,N-1$, $j=1,\ldots,m$, and $|\pmb{\omega}|=\omega$. Moreover, in the second equation of \eqref{eq:RE}, the parameter $\mu_j \in \mathbb{R}$ is a constant that adjusts the magnitude and orientation of the vectors on the left- and right-hand sides.
\end{theorem}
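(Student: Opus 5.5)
The plan is to substitute the ansatz \eqref{eq:Ansatz} into the reduced system \eqref{eq:EquationOfMotionReduced}, using \eqref{eq:Derivative} to replace every time derivative by $\pmb\omega\times(\cdot)$. Evaluating at $t=0$ and using the $SO(3)$-equivariance of $V$, so that $\partial V/\partial\mathbf q_{ij}$ and $\partial V/\partial\mathbf a_j$ rotate with $e^{\widehat{\pmb\omega}t}$, the whole problem reduces to four algebraic identities at the initial point:
\begin{gather*}
\pmb\omega\times\mathbf q_{ij}=\frac{1}{m_i}\sum_{k\neq i}\mathbf p_{ik}+\frac{1}{m_j}\sum_{k\neq j}\mathbf p_{kj},\qquad
\pmb\omega\times\mathbf p_{ij}=-\frac{\partial V}{\partial\mathbf q_{ij}},\\
\pmb\omega\times\mathbf a_j=\frac{1}{\Theta_{j1}}\pmb l_j\times\mathbf a_j,\qquad
\pmb\omega\times\pmb l_j=\frac{\partial V}{\partial\mathbf a_j}\times\mathbf a_j .
\end{gather*}

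First I handle the translational part. I would invert the linear relation between $\dot{\mathbf q}_{ij}$ and the $\mathbf p$'s, using the Casimir relations \eqref{eq:CasimirsTranslational0}, equivalently \eqref{eq:recover}, to work with the fundamental set $\bar{\mathbf P}$. The cleanest route is to go back to barycentric momenta. For a rigid rotation, $\tilde{\mathbf p}_i=m_i\,\pmb\omega\times\tilde{\mathbf q}_i$. Using $\sum m_k\tilde{\mathbf q}_k=0$, we can write $\tilde{\mathbf q}_i=\frac{1}{m_s}\sum_{k}m_k(\tilde{\mathbf q}_i-\tilde{\mathbf q}_k)=\frac{1}{m_s}\sum_{k\neq i}m_k\mathbf q_{ik}$. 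With $\mathbf p_{ij}=\frac1N(\tilde{\mathbf p}_i-\tilde{\mathbf p}_j)$ and $\mathbf q_{kj}=-\mathbf q_{jk}$, this gives the first formula of (i). Substituting this into $\pmb\omega\times\mathbf p_{ij}=-\partial V/\partial\mathbf q_{ij}$ then gives the first equation of (ii) directly, since $\pmb\omega\times(\pmb\omega\times\cdot)=\widehat{\pmb\omega}^2$. The one point to check is consistency: the inversion must be unique on the leaf $T$, which follows from $\hat\Phi$ being an isomorphism.

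Next I handle the rotational part. The identity $(\pmb\omega-\pmb l_j/\Theta_{j1})\times\mathbf a_j=0$ forces $\pmb l_j=\Theta_{j1}\pmb\omega+\lambda_j\mathbf a_j$ for some scalar $\lambda_j$. To identify $\lambda_j$, I would use Proposition \ref{propo:RE}: the body angular velocity is $\pmb\Omega_j=A_{j0}^T\pmb\omega$, so the body angular momentum is $\mathbb I\pmb\Omega_j$. Its spatial form is $\Theta_{j1}\pmb\omega+(\Theta_{j3}-\Theta_{j1})\langle\pmb\omega,\mathbf a_j\rangle\mathbf a_j$, with $\langle\pmb\omega,\mathbf a_j\rangle=\omega w_j$. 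This gives $k_j=\omega(\Theta_{j3}-\Theta_{j1})$. The alternative $k_j=0$ accounts for the degenerate situation in which the third-component Casimir $L_{j3}$ does not pin $\lambda_j$: either $w_j=0$, or the spin about the symmetry axis is absent, as in the $SO(2)$-reduced picture. I expect this to be the main subtle step, because the reduced equations alone only give $\lambda_j$ as free, and the dichotomy has to be justified by tying it back to the unreduced rigid motion.

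Finally, I insert $\pmb l_j$ into the last identity. The term $\pmb\omega\times\Theta_{j1}\pmb\omega$ vanishes, which leaves
\[
k_jw_j\,\pmb\omega\times\mathbf a_j=\frac{\partial V}{\partial\mathbf a_j}\times\mathbf a_j .
\]
This rearranges to $\bigl(k_jw_j\pmb\omega-\partial V/\partial\mathbf a_j\bigr)\times\mathbf a_j=0$. A vector whose cross product with $\mathbf a_j\neq0$ vanishes is parallel to $\mathbf a_j$, so there is a scalar $\mu_j$ with $\mu_j\mathbf a_j=k_jw_j\pmb\omega-\partial V/\partial\mathbf a_j$. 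This is the second equation of (ii). Because the gradient $\partial V/\partial\mathbf a_j$ is defined only up to multiples of $\mathbf a_j$ on $S^2$, the multiplier $\mu_j$ is naturally undetermined, consistent with the statement. Conversely, each step is an equivalence, so the conditions also characterize the relative equilibria.
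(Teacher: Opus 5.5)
Your argument is correct, and its skeleton is the paper's. You substitute the ansatz and cancel $e^{\widehat{\pmb\omega}t}$, rightly making the needed equivariance of $\partial V$ explicit. You write $\pmb l_j=\Theta_{j1}\pmb\omega+\lambda_j\mathbf a_j$ and fix $\lambda_j$ through the co-rotation of Proposition~\ref{propo:RE}; your computation of $A_j\mathbb I A_j^T\pmb\omega$ is the paper's two-way evaluation of $L_{j3}$, written in vector form. Finally you read off $\mu_j$ from $(k_jw_j\pmb\omega-\partial V/\partial\mathbf a_j)\times\mathbf a_j=0$.

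The genuine difference is how you solve for the linear momenta. The paper stays in relative coordinates: it writes the velocity relations in the fundamental variables as $\mathcal A\bar{\mathbf P}=M_{\widehat{\omega}}\bar{\mathbf Q}$, inverts $\mathcal A$ by Sherman--Morrison to get $\mathbf p_{1j}$, and extends to all pairs with \eqref{eq:recover}. You instead lift back to barycentric variables, where $\tilde{\mathbf p}_i=m_i\pmb\omega\times\tilde{\mathbf q}_i$ can be written down. You then transport it with $\tilde{\mathbf q}_i=\frac{1}{m_s}\sum_{k\neq i}m_k\mathbf q_{ik}$ and $\mathbf p_{ij}=\frac1N(\tilde{\mathbf p}_i-\tilde{\mathbf p}_j)$. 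This needs no matrix algebra, treats all pairs $(i,j)$ at once, and explains the shape of the formula: the bracket is $\frac1N(m_i\tilde{\mathbf q}_i-m_j\tilde{\mathbf q}_j)$. The paper's route instead gives the invertibility of $\mathcal A$ and an explicit inverse.

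To close your uniqueness step, ``$\hat\Phi$ is an isomorphism'' is not quite enough on its own. Note that $\sum_{k\neq i}\mathbf p_{ik}=\tilde{\mathbf p}_i$, so the $\dot{\mathbf q}_{ij}$ equations say that $\tilde{\mathbf p}_i/m_i-\pmb\omega\times\tilde{\mathbf q}_i$ is a vector $\mathbf c$ independent of $i$. Summing with weights $m_i$ gives $m_s\mathbf c=\sum_i\tilde{\mathbf p}_i-\pmb\omega\times\sum_i m_i\tilde{\mathbf q}_i=0$. This one line is the counterpart of $\det\mathcal A\neq0$.

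Your remark on the alternative $k_j=0$ is not needed, and it is inaccurate. In the paper this alternative is just the sub-case $\pmb l_j=\Theta_{j1}\pmb\omega$. Under co-rotation one always has $\lambda_j=\omega(\Theta_{j3}-\Theta_{j1})w_j$. So for $\Theta_{j3}\neq\Theta_{j1}$ and $\omega\neq0$ that sub-case only arises when $w_j=0$, where $k_jw_j=0$ anyway; proving the first alternative already proves the disjunction. A vanishing $\lambda_j$ with $w_j\neq0$ would require a specific nonzero spin about $\mathbf a_j$ relative to the rotating frame. Linking $k_j=0$ to absent spin therefore reverses the situation: zero relative spin is exactly the case that produces the nonzero $k_j$.
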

\begin{proof}
(i) The relations for the linear momenta follow by inserting \eqref{eq:Ansatz} into the differential equation \eqref{eq:EquationOfMotionReduced} for $\dot{\mathbf q}_{ij}$ and taking into account the derivative rule \eqref{eq:Derivative}. This leads to the following system of linear equations
\begin{equation}
\label{eq:MomentumRelations}
\frac{1}{m_i}\sum_{k\neq i}\mathbf p_{ik}+\frac{1}{m_j}\sum_{k\neq j}\mathbf p_{kj}
=\pmb \omega \times \mathbf q_{ij}, 
\quad i<j\in\{1,\ldots,N\}.
\end{equation}
Now, using the relations \eqref{eq:CasimirsTranslational0}, we reduce to the set of fundamental variables $(\bar{\mathbf{Q}},\bar{\mathbf{P}})$. Notice that any linear relation among the reduced variables extends its validity to the extended set of variables $({\mathbf{Q}},{\mathbf{P}})$ through the relations \eqref{eq:CasimirsTranslational0}. The transformation of \eqref{eq:MomentumRelations} into the fundamental set of variables yields the linear system
\begin{equation}
    \label{eq:LinSyst}
    \mathcal{A}\cdot \bar{\mathbf{P}}=M_{\widehat{\omega}}\, \cdot\bar{\mathbf{Q}},
\end{equation}
where $M_{\widehat{\omega}}$ is a diagonal matrix consisting of $N-1$ repeated blocks of $\widehat{\pmb{\omega}}$, and $\mathcal{A}$ is the $(N-1)\times (N-1)$ diagonal-blocks matrix
\begin{equation}
  \mathcal{A}=  \left(
\begin{array}{cccc}
 \frac{N-1}{m_2}+\frac{1}{m_1} & \frac{1}{m_1}-\frac{1}{m_2} & \dots & \frac{1}{m_1}-\frac{1}{m_2} \\
 \frac{1}{m_1}-\frac{1}{m_3} & \frac{N-1}{m_3}+\frac{1}{m_1} & \dots & \frac{1}{m_1}-\frac{1}{m_3} \\
 \vdots & \vdots & \ddots & \vdots \\
 \frac{1}{m_1}-\frac{1}{m_N} & \dots & \frac{1}{m_1}-\frac{1}{m_N} & \frac{N-1}{m_N}+\frac{1}{m_1}
\end{array}
\right).
\end{equation}
We solve this system in blocks. First, we note that $\det(\mathcal{A})=N^{N-2}m/m_p$, where $m_p=\prod_{i=1}^N m_i$. Hence, system \eqref{eq:LinSyst} can be solved for $\bar{\mathbf{P}}$ by inverting $\mathcal{A}$.

To compute the inverse of $\mathcal{A}$, we exploit its rank-one perturbation structure. Observe that $\mathcal{A}$ can be written as
\[
\mathcal{A} = M\bigl(d I + u v^T\bigr),
\]
where $I$ denotes the $N$-identity matrix, $u=(m_1-m_2,\ldots,m_1-m_N)^T$, $v=(1,\ldots,1)^T\in\mathbb{R}^{N-1}$, $M=\mathrm{Diag}\bigl(1/(m_1-m_2),\ldots,1/(m_1-m_N)\bigr)$, and $d=N\,m_1$. Since $dI$ is invertible, the matrix $dI+uv^T$ is a rank-one perturbation of a diagonal matrix, and its inverse can be computed using the Sherman--Morrison formula \cite{Hager1989}. This yields
\[
(dI+uv^T)^{-1}
=\frac{1}{d}I-\frac{1}{d}\frac{u v^T}{d+v^Tu}.
\]
Consequently, the inverse of $\mathcal{A}$ is given by
\[
\mathcal{A}^{-1}
=(dI+uv^T)^{-1}M^{-1},
\]
which provides an explicit expression for $\bar{\mathbf{P}}$ in terms of $\bar{\mathbf{Q}}$. Precisely, we get
$${\mathbf p}_{1j}=\pmb\omega\times\left[\dfrac{m_1}{N\,m_s}\sum_{k\neq i}{m_k\,\mathbf q}_{1k}+\dfrac{m_j}{N\,m_s}\sum_{k\neq j}{m_k\,\mathbf q}_{kj}\right].$$
Then, this formula is extended to arbitrary $i,j$ using \eqref{eq:recover}.

The expression for the angular momenta is obtained by inserting \eqref{eq:Ansatz} into the differential equation
\eqref{eq:EquationOfMotionReduced} for $\dot {\mathbf a}_i$, which gives $$\widehat{\pmb\omega}\,e^{\widehat{\pmb\omega}t}{\mathbf a}_i=\dfrac{1}{\Theta_{i1}}\,e^{\widehat{\pmb\omega}t}(\pmb{l}_i\times{\mathbf a}_i).$$
Noting that $e^{\widehat{\pmb\omega}t}$ cancels out we obtain the relation $\left(\Theta_{i1}\,{\pmb\omega}-\pmb{l}_i \right)\times {\mathbf a}_i=0$, which implies that ${\mathbf a}_i$ and $\left(\Theta_{i1}\,{\pmb\omega}-\pmb{l}_i \right)$ are either proportional or $\Theta_{i1}\,{\pmb\omega}=\pmb{l}_i $. In the first case, since $\mathbf{a}_i$ is non-zero there is a real number $\lambda_i\neq0$ such that 
$$\pmb{l}_i=\Theta_{i1}\,{\pmb\omega}+\lambda_i{\mathbf a}_i.$$ 
For the case in which $\Theta_{i1}\,{\pmb\omega}=\pmb{l}_i $, we have that $\lambda_i=0$. Moreover, for the general case $\Theta_{i1}\,{\pmb\omega}\neq\pmb{l}_i $, the value of the constant $\lambda_i$ is related to the Casimir $L_{i3}$, the third component of the angular momentum of the $i$th body in the body frame. On the one hand, taking the scalar product of the above equation with ${\mathbf a}_i$ gives
$$L_{i3}=\pmb{l}_i\cdot{\mathbf a}_i= 
\Theta_{i1}\,{\pmb\omega}\cdot{\mathbf a}_i+\lambda_i=
\Theta_{i1} \omega w_i + \lambda_i
$$
On the other hand
$$L_{i3}=\pmb{l}_i\cdot \mathbf{ a}_i=A_i\mathbb I_iA_i^T\,\pmb\omega_i\cdot \mathbf a_i=\left(\Theta_{i1}Id_3+(\Theta_{i3}-\Theta_{i1})\,\mathbf a_i\cdot \mathbf a_i^T\right)\,\pmb\omega_i\cdot \mathbf a_i=\Theta_{i3}\omega w_i$$
and hence $\lambda_i=k_i\,w_i=(\Theta_{i3}-\Theta_{i1})\,\omega\,w_i$ the formula for $k_i=(\Theta_{i3}-\Theta_{i1})\,\omega$ follows.

To prove part (ii) we use the ansatz \eqref{eq:Ansatz} in the equations of motion \eqref{eq:EquationOfMotionReduced} for $\mathbf p_i$ and $\mathbf l_i$, and then insert the results \eqref{eq:REmomenta} just proved to eliminate the momenta. This immediately gives the equation for $\mathbf q_i$. The equation for $\mathbf a_j$ becomes $(k_jw_j \pmb\omega - \partial V/ \partial \mathbf a_j) \times \mathbf a_j = 0$. This can be reformulated by saying that the first factor is parallel to $\mathbf a_j$, and hence the result follows with the constant of proportionality $\mu_j$.
\end{proof}

\begin{remark}
    The case $n=2$, $m=1$ was studied in \cite{Scheeres2006}, where the author gave conditions for relative equilibrium. Notice that the conditions (26) and (27) appearing in \cite{Scheeres2006} are equivalent to Proposition~\ref{propo:RE} and Theorem~\ref{theorem:Cond} $(ii)$ respectively.
\end{remark}

\subsection{Relative equilibria equations}
In what remains, we denote $n=N-1$ and we are going to restrict to the fundamental set of variables $(\bar{\mathbf{Q}},\bar{\mathbf{P}})$. In this regard, we rewrite some of the conditions for relative equilibria.
\begin{Lemma}
    \label{lemma:FundamentalREmomenta}
    The relative equilibria condition given in \eqref{eq:RE} can be expressed in terms of fundamental variables $(\bar{\mathbf{Q}},\bar{\mathbf{P}})$ as follows
    \begin{equation}
        \label{eq:REfundamental}
        \dfrac{1}{N}\,\widehat{\pmb\omega}^2\left[m_i\mathbf{q}_{1i}+{\tau_i}\sum_{k=2}^N{m_k\,\mathbf q}_{1k}\right]=-\dfrac{\partial V}{\partial \mathbf q_{1i}}.
    \end{equation}
where $\tau_i=({m_1-m_i})/{m_s}.$
\end{Lemma}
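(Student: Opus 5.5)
Specialize the first equation of \eqref{eq:RE} (Theorem~\ref{theorem:Cond}(ii)) to pairs $(1,i)$, $i=2,\dots,N$, and rewrite the bracket using \eqref{eq:recover}. For $(i,j)=(1,i)$ the left-hand side is $\frac1N\widehat{\pmb\omega}^2$ applied to
\[
\mathbf S_{1i}:=\frac{m_1}{m_s}\sum_{k\neq 1}m_k\mathbf q_{1k}+\frac{m_i}{m_s}\sum_{k\neq i}m_k\mathbf q_{ki}.
\]
Replace $\mathbf q_{ki}=\mathbf q_{1i}-\mathbf q_{1k}$, with $\mathbf q_{11}=0$ and the $k=1$ term equal to $\mathbf q_{1i}$. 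This gives
\[
\sum_{k\neq i}m_k\mathbf q_{ki}=(m_s-m_i)\mathbf q_{1i}-\sum_{k=2}^N m_k\mathbf q_{1k}+m_i\mathbf q_{1i}=m_s\mathbf q_{1i}-\sum_{k=2}^N m_k\mathbf q_{1k}.
\]
Substituting back,
\[
\mathbf S_{1i}=m_i\mathbf q_{1i}+\frac{m_1-m_i}{m_s}\sum_{k=2}^N m_k\mathbf q_{1k}=m_i\mathbf q_{1i}+\tau_i\sum_{k=2}^N m_k\mathbf q_{1k},
\]
which is exactly the bracket in \eqref{eq:REfundamental}. The only care needed is the bookkeeping of the $k=1$ and $k=i$ terms.

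The right-hand side is the more delicate step. In \eqref{eq:RE}, $\partial V/\partial\mathbf q_{ij}$ is taken with $V$ as a function of the redundant collection $\mathbf Q$. In \eqref{eq:REfundamental}, $\partial V/\partial\mathbf q_{1i}$ should mean the derivative of $V$ restricted to $\bar{\mathbf Q}$ via \eqref{eq:recover}. By the chain rule this is $\partial V/\partial\mathbf q_{1i}-\sum_{j\neq1,i}\partial V/\partial\mathbf q_{ij}$, up to orientation signs.

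To reconcile the two, I would return to the dynamics and note that \eqref{eq:RE} comes from $\dot{\mathbf p}_{ij}=-\partial V/\partial\mathbf q_{ij}$ together with the momentum relations \eqref{eq:REmomenta}. Since \eqref{eq:recover} holds for momenta, the equations for $\dot{\mathbf p}_{1i}$ alone determine all the others. I would therefore derive the fundamental equations directly from the Hamiltonian restricted to the symplectic leaf $c_q=c_p=0$, where $V$ is a function of $\bar{\mathbf Q}$. In that setting the fundamental momenta $\bar{\mathbf P}$ satisfy $\dot{\mathbf p}_{1i}=-\partial V/\partial\mathbf q_{1i}$, with the partial derivative taken in the $\bar{\mathbf Q}$ variables. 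I expect this to follow from the block structure of $J_T$ and the fact that the Casimirs are linear.

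Inserting the ansatz \eqref{eq:Ansatz} gives $\dot{\mathbf p}_{1i}=\widehat{\pmb\omega}\mathbf p_{1i}$. Using (i) of Theorem~\ref{theorem:Cond} with $j=i$, written as $\mathbf p_{1i}=\widehat{\pmb\omega}\,\mathbf S_{1i}/N$, the left-hand side becomes $\frac1N\widehat{\pmb\omega}^2\mathbf S_{1i}$, and the lemma follows.

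The main obstacle is this identification of the gradient on the leaf. If it proves awkward, a fallback is to read \eqref{eq:REfundamental} simply as the $(1,i)$ instance of \eqref{eq:RE} with the same partial-derivative convention. In that reading only the algebraic simplification of $\mathbf S_{1i}$ needs proof.
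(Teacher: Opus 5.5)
Your rewriting of the bracket is correct, and it is the entire content of the paper's proof. The paper plugs \eqref{eq:recover} into the $(1,i)$ instance of \eqref{eq:RE} and carries the right-hand side over unchanged, i.e.\ it tacitly adopts your fallback reading of $\partial V/\partial\mathbf q_{1i}$. Your fallback version is therefore the paper's argument, and it is valid under whatever convention makes \eqref{eq:RE} itself valid.

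Your primary plan for the right-hand side would fail for $N\ge 3$, so drop it rather than try to complete it. Its key claim is $\dot{\mathbf p}_{1i}=-\partial\bar V/\partial\mathbf q_{1i}$, with $\bar V$ the restriction of $V$ to $\bar{\mathbf Q}$ via \eqref{eq:recover}. This is false. On the leaf, $(\bar{\mathbf Q},\bar{\mathbf P})$ are coordinates but not canonical ones: their brackets are the fundamental block of $\bar T$, namely $\{\mathbf q_{1i},\mathbf p_{1k}\}=t_{(1i),(1k)}I_3$ with $t_{(1i),(1i)}=2/N$ and $t_{(1i),(1k)}=1/N$ for $k\neq i$. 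Hence
\[
\dot{\mathbf p}_{1i}=-\frac{1}{N}\Bigl(\frac{\partial\bar V}{\partial\mathbf q_{1i}}+\sum_{k=2}^{N}\frac{\partial\bar V}{\partial\mathbf q_{1k}}\Bigr).
\]
You can also see this physically. The quantity $\partial\bar V/\partial\mathbf q_{1i}=-\partial V/\partial\tilde{\mathbf q}_i=\dot{\tilde{\mathbf p}}_i$ is the force on body $i$, while $\dot{\mathbf p}_{1i}=\frac1N(\dot{\tilde{\mathbf p}}_1-\dot{\tilde{\mathbf p}}_i)$. Your identity would therefore force $\dot{\tilde{\mathbf p}}_1=-(N-1)\dot{\tilde{\mathbf p}}_i$, which is momentum conservation when $N=2$ but false in general for $N\ge 3$.

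So the block structure of $\mathbf J_T$ is exactly what breaks the identity you expected it to give. Under the $\bar V$ reading, \eqref{eq:REfundamental} is not a valid relative-equilibrium condition for $N\ge 3$; the equal-mass Lagrange triangle already violates it. In fact $\bar T$ is symmetric with $\bar T^2=\bar T$, i.e.\ it is the orthogonal projector onto the tangent space of the leaf. The reading of $\partial V/\partial\mathbf q_{ij}$ that makes \eqref{eq:EquationOfMotionReduced} consistent with $\mathbf J_T$ is therefore the tangential gradient, whose $(1,i)$ component is the bracketed expression above divided by $N$.

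The paper never confronts this point. In \eqref{eq:PartialsV} it even differentiates through the $\bar{\mathbf Q}$-invariants, i.e.\ it uses the $\bar V$ reading. This is harmless there only because all later applications have $N=2$, where $\bar{\mathbf Q}=\mathbf Q$ and every reading coincides.
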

\begin{proof}
    This expression is obtained by plugging \eqref{eq:recover} into \eqref{eq:RE}.
\end{proof}

Notice that the relations \eqref{eq:REmomenta} tell us that the momenta are obtained from the coordinates. Thus, we are left with \eqref{eq:RE}, which is a system of $3(n+m)$ nonlinear equations for the initial configuration $(\mathbf q_{1i},\mathbf a_j)$, with $i=1,\ldots,n$, and $j=1,\ldots,m$. Moreover, considering the $O(3)$-symmetry of the potential $V$ and hence the Hamiltonian function, equations \eqref{eq:RE} are susceptible of a convenient rearrangement. Precisely, $V$ may be expressed in terms of the invariants associated with the action of $O(3)$ 
in the following manner 
$$V(\mathbf q,\mathbf a)=\tilde V(\pmb \alpha,\pmb \beta,\pmb \gamma),\qquad \pmb \alpha\in\mathbb R^{n^2},\;\pmb \beta\in\mathbb R^{nm},\;\pmb \gamma\in\mathbb R^{m^2-m},$$
where 
$$\pmb \alpha=\{ q_{il}\}_{i,l=1}^n,\quad \pmb \beta=\{ \nu_{ij}\}_{i=1,\ldots, n}^{j=1,\ldots, m}, \quad \pmb \gamma=\{ a_{jk}\}_{j,k=1,j\neq k}^m,$$ 
are vectors with components displayed in lexicographical order according to their sub-indices being the mentioned components given as 
$$ q_{il}=\mathbf q_{1i}\cdot \mathbf q_{1l},\quad  \nu_{ij}=\mathbf q_{1i}\cdot \mathbf a_j,\quad a_{jk}=\mathbf a_j\cdot \mathbf a_k.$$ 

    

This observation shows that ${\partial V}/{\partial \mathbf q_{1j}}$ and ${\partial V}/{\partial \mathbf a_i}$ are linear combinations of position vectors $\mathbf q_{1i}$, orientation vectors $\mathbf a_j$
\begin{gather}
\begin{aligned}
\label{eq:PartialsV}
\dfrac{\partial V}{\partial \mathbf q_{1i}}&=\sum_{l=2}^N a_{il}\mathbf q_{1l}+\sum_{j=1}^m b_{ij}\mathbf a_j\\
	\dfrac{\partial V}{\partial \mathbf a_j}&=\sum_{k=1,k\neq j}^{m} d_{jk}\mathbf a_k+\sum_{i=2}^{N} b_{ij}\mathbf q_{1i},
\end{aligned}
\end{gather}
where $a_{ii}=2\frac{\partial \tilde V}{\partial  q_{ii}}$, $a_{il}=\frac{\partial \tilde V}{\partial  q_{il}}$ if $i\neq l$, $b_{ij}=\frac{\partial \tilde V}{\partial {\nu}_{ij}}$, and $d_{jk}=\frac{\partial \tilde V}{\partial  a_{jk}}$ with $d_{jj}=0$.

Therefore, inserting the partial derivatives of $V$ given in \eqref{eq:PartialsV} in the relations \eqref{eq:RE} we obtain the corresponding equations for $\mathbf q$ and $\mathbf a$ as follows 
\begin{gather}
\begin{aligned}
\label{eq:RE2}
	\dfrac{1}{N}\,\widehat{\pmb\omega}^2\left[m_i\mathbf{q}_{1i}+{\tau_i}\sum_{k=2}^N{m_k\,\mathbf q}_{1k}\right]+\sum_{l=2}^N a_{il}\mathbf q_{1l}+\sum_{j=1}^m b_{ij}\mathbf a_j&=0,\\
	\mu_j\mathbf a_j-k_jw_j\, \pmb\omega+\sum_{k=1,k\neq j}^{m} d_{jk}\mathbf a_k+\sum_{i=2}^{N} b_{ij}\mathbf q_{1i}&=0.
\end{aligned}
\end{gather}

\begin{remark}
Notice that the above system \eqref{eq:RE2} is nonlinear. However, thinking in the partial derivatives of the potential $\tilde V$ as constants depending on the fixed values of the invariants, we may interpret this system as ``linear'' in $\mathbf q_{1i}$, $\mathbf a_j$. At the same time, the nonlinearity is encapsulated in the coefficients as functions of the invariants. 
\end{remark}

This observation is crucial to express the system in its final form, allowing us to discuss conditions for relative equilibria. Precisely, considering the components of the vectorial variables $\mathbf q_{1i}=(x_i,y_i,z_i)$ and $\mathbf a_{j}=(u_j,v_j,w_j)$, we can rearrange system \eqref{eq:RE2} in the following form
\begin{gather}
\begin{aligned}
\label{eq:RE3aux}
		 -\dfrac{\omega^2}{N}\left(\left[
		\begin{array}{c}
			m_i x_i  \\
			m_i y_i  \\
			0  \\
		\end{array}
		\right]+\tau_i\sum_{k=2}^N\left[
		\begin{array}{c}
			m_k x_k  \\
			m_k y_k  \\
			0  \\
		\end{array}
		\right]\right)+\sum_{l=2}^N a_{il}\mathbf q_{1l}+\sum_{j=1}^m b_{ij}\mathbf a_j&=0, \\
		 \sum_{k=1,k\neq j}^{m} d_{jk}\mathbf a_k+\sum_{i=2}^{N} b_{ij}\mathbf q_{1i}+\left[
		\begin{array}{c}
			\mu_j u_j \\
			\mu_j v_j \\
			(\mu_j-\omega\,k_j)w_j
		\end{array}
		\right]&=0,
\end{aligned}
\end{gather}
where $ i=2,\ldots,N$, $j=1,\ldots,m$. Assuming that $a_{il}$, $b_{jk}$ and $d_{ij}$ are constants coefficients depending on the invariants, the variables are coupled in three different groups
$$\xi=(x_2,\ldots,x_N,u_1,\ldots,u_m)^T,\quad \eta=(y_2,\ldots,y_N,v_1,\ldots,v_m)^T, \quad \zeta=(z_2,\ldots,z_N,w_1,\ldots,w_m)^T.$$
Employing the following notation for the variables $\pmb\Xi^T=(\xi^T,\eta^T,\zeta^T)$, the coefficients $\pmb\Sigma^T=(\pmb \alpha^T,\pmb \beta^T,\pmb \gamma^T, \pmb \mu^T)$, the arbitrary parameters $\pmb\mu^T=(\mu_1,\ldots,\mu_m)$, and fixed physical parameters $\mathbf m=(m_2,\ldots,m_N)$, $\pmb k^T=(k_1,\ldots,k_m)$, and  $\omega$, we may decompose \eqref{eq:RE2} in the following diagonal form
\begin{equation}
\label{eq:RE3}
M_{\pmb\Sigma}\,\pmb\Xi=\mathbf{0}_{n+m},\qquad M_{\pmb\Sigma}=Diag[M_1, M_1, M_2],
\end{equation}
where $M_1$, $M_2$ are square $(n+m)$-matrices constructed from blocks as follows
$$
M=\left[\begin{array}{cc}\{a_{il}\}_{n\times n} & \{b_{ij}\}_{n\times m}  \\\{b_{ij}\}_{n\times m}^T & \{d_{jk}\}_{m\times m}\end{array}\right],\; \left.\begin{array}{lll}M_1 & = & M+Diag[(-\dfrac{\omega^2}{N}\mathbf m,\pmb\mu)]+\mathfrak{M} \\M_2 & = & M+Diag[(\mathbf 0_n,\pmb\mu-\omega\pmb k)]\end{array}\right., 
$$
with all indices running in the ranges $i,l=2,\ldots,N$, and $j,k=1,\ldots,m$. Moreover, $\mathfrak{M}$ is the $(n+m)$-squared matrix given by
$$\mathfrak{M}=\left[\begin{array}{cc}\mathfrak{M}_{11} & \mathbf{0}_{n\times m}  \\\mathbf{0}_{m\times n} & \mathbf{0}_{m\times m}\end{array}\right],\quad\mathfrak{M}_{11}=\left[\begin{array}{ccc}\tau_2 m_2&\ldots&\tau_2 m_N  \\\vdots&\ddots&\vdots\\ \tau_Nm_2 &\ldots& \tau_N m_N\end{array}\right],$$
and $\mathbf{0}_r$, $\mathbf{0}_{sr}$ the $r$ and $s\times r$ null vector and matrix respectively. 

The convenience of expressing the relative equation as in \eqref{eq:RE3} will be fully exploited in the following section. However, without further manipulations, the architecture of the above equations provides the following results

\begin{proposition}
\label{propo:Equilibria}
    Consider a relative equilibrium configuration described by the equations~\eqref{eq:RE3}. 

    \begin{itemize}
        \item[(i)] If $Rank(M_1)=m+n$, then all centers of mass and  are placed along the $z$-axis, and bodies are oriented with their symmetry axis along the $w$-axis, sitting one next to the other in a vertical pile. In this case, the configuration is referred to as a piled-up equilibrium.
        
        \item[(ii)] If $Rank(M_2)=m+n$, then $\zeta=0$ and all the position vectors and symmetry axes lie in the same horizontal coordinate plane.

        \item[(iii)] If $Rank(M_1)=m+n-1$, then all the position vectors and orientations lie in a common plane containing the rotation axis $\pmb\omega$. Consequently, each center of mass describes a circular orbit in a plane perpendicular to $\pmb\omega$. The orbital planes may not be the same. Moreover, all the position and orientation vectors are parallel if we further assume that $L_{j3}=0$.
    \end{itemize}
\end{proposition}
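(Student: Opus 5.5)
The plan is to read everything off the block-diagonal structure $M_{\pmb\Sigma}=Diag[M_1,M_1,M_2]$ in \eqref{eq:RE3}. Once the coefficients $a_{il},b_{ij},d_{jk},\mu_j$ are frozen at the values of the invariants $(\pmb\alpha,\pmb\beta,\pmb\gamma)$ of the configuration, \eqref{eq:RE3} splits into three independent homogeneous linear systems: $M_1\xi=0$, $M_1\eta=0$ and $M_2\zeta=0$. Each rank statement then becomes a statement about the kernels. We also use two standing facts about the configuration. First, every $\mathbf a_j$ is a unit vector, so $(u_j,v_j,w_j)\neq 0$. Second, the $\mathbf q_{1i}$ are nonzero, because bodies do not overlap at their centers.

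For (i), full rank of $M_1$ forces $\xi=\eta=0$. Hence $x_i=y_i=u_j=v_j=0$ for all $i,j$. Every $\mathbf q_{1i}$ is then parallel to $\mathbf e_z$, so all centers of mass lie on the rotation axis. Each $\mathbf a_j=(0,0,\pm1)$, by the Casimir $c_j=1$, so the symmetry axes are vertical. This gives the piled-up (vertical pile) configuration. The only remaining consistency condition is $M_2\zeta=0$ with $\zeta\neq0$, which is automatically non-trivial.

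For (ii), full rank of $M_2$ gives $\zeta=0$, so $z_i=w_j=0$. All $\mathbf q_{1i}$ and $\mathbf a_j$ then lie in the plane $z=0$, which is orthogonal to $\pmb\omega$. Through \eqref{eq:recover}, all $\mathbf q_{ij}$ lie in that plane as well.

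For (iii), $\ker M_1$ is one-dimensional, spanned by some $\mathbf v\in\mathbb R^{n+m}$. Then $\xi=s\mathbf v$ and $\eta=t\mathbf v$ for scalars $s,t$. So the horizontal projection of every vector $\mathbf q_{1i}$, $\mathbf a_j$ is $v_\sigma(s,t,0)$. All vectors therefore lie in the plane spanned by $(s,t,0)$ and $\mathbf e_z$, which contains $\pmb\omega$. This plane rotates rigidly under the ansatz \eqref{eq:Ansatz}. Each center of mass, written relative to the barycenter as a linear combination of the $\mathbf q_{1i}$, has constant height $z$ and constant horizontal radius. It therefore moves on a circle in a plane $z=\mathrm{const}$ perpendicular to $\pmb\omega$, and these heights may differ between bodies.

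For the final claim with $L_{j3}=0$, we use Theorem~\ref{theorem:Cond}: $L_{j3}=\Theta_{j3}\omega w_j$, so $w_j=0$ and each $\mathbf a_j$ is horizontal. The $\zeta$-system becomes $M_2\zeta=0$ with the $w$-block zero. From this we want to deduce that the $z_i$ vanish, or are proportional to the common horizontal direction, making all vectors parallel to $(s,t,0)$. This last step is the main obstacle. It does not follow from the rank of $M_1$ alone. I would try to show that the $z$-equations of the $\mathbf a_j$ rows reduce to $\sum_i b_{ij}z_i=0$. Combined with the $\mathbf q$ rows, which coincide with the $M_1$ rows without the $\omega^2$ terms, this should force $\zeta=0$ under nondegeneracy of the $b_{ij}$. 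Failing that, I would state the parallelism as holding in the generic case where $M_2$ restricted to the $z$-block is nonsingular.
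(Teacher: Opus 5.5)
Your treatment of (i), (ii) and the coplanarity part of (iii) follows the paper's argument. You freeze the coefficients at the configuration's invariants, split \eqref{eq:RE3} into $M_1\xi=0$, $M_1\eta=0$ and $M_2\zeta=0$, and read the conclusions off the kernels. Writing $\xi=s\mathbf v$, $\eta=t\mathbf v$ is cleaner than the paper's $\xi=\lambda\eta$, which tacitly assumes $\eta\neq0$.

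The gap is in the last sentence of (iii). The step you call the main obstacle cannot be completed, because the hypotheses do not imply it. With $w_j=0$, the system $M_2\zeta=0$ reduces to $\sum_l a_{il}z_l=0$ and $\sum_i b_{ij}z_i=0$. That is, $(z_2,\dots,z_N)\in\ker\{a_{il}\}\cap\ker\{b_{ij}\}^T$. This is a condition on the potential, and the rank of $M_1$ does not control it. The relevant nondegeneracy also involves the $a_{il}$, not just the $b_{ij}$; for $N=2$, $m=1$ it is $(a_{22},b_{21})\neq(0,0)$.

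The paper's own Case~5 in Section~\ref{sec:SphereAxSyBodyAnalysis} shows this concretely. The choice $x=y=w=0$ with $a_{22}=b_{21}=\mu_1=0$ solves \eqref{eq:RE3}. It gives $M_1$ of rank $1=m+n-1$ when $\omega\neq0$, and $L_{13}=\Theta_{13}\,\omega\,w=0$. Yet $\mathbf q\parallel\mathbf e_3$ is orthogonal to $\mathbf a$. So no argument can make the position vectors parallel under the stated hypotheses alone. Your fallback of assuming nondegeneracy proves a different statement, not the one written.

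What does follow is your first step carried one line further, and this is all the paper's proof establishes. $L_{j3}=\Theta_{j3}\omega w_j$ gives $w_j=0$. Combined with $\xi=s\mathbf v$, $\eta=t\mathbf v$, each $\mathbf a_j$ is then a multiple of $(s,t,0)$. So the orientation vectors are horizontal and mutually parallel, while the $\mathbf q_{1i}$ only lie in the vertical plane through $(s,t,0)$. You should state this conclusion explicitly; you stopped at ``each $\mathbf a_j$ is horizontal.''

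If you also want the positions to be parallel, state that as a separate claim under the explicit extra hypothesis $\ker\{a_{il}\}\cap\ker\{b_{ij}\}^T=\{0\}$. That hypothesis forces $z_i=0$, and then every vector is parallel to $(s,t,0)$.
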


\begin{proof}
    The proofs follow directly from the linear arrangement of equations~\eqref{eq:RE3} and the respective ranks of the matrices $M_1$ and $M_2$. In particular:
    
Item $(i)$ imposes $Rank(M_1)=m+n$ in the homogeneous equations \eqref{eq:RE3}, then $x_i=y_i=u_j=v_j=0$ for all $i=2,\ldots,N$ and $j=1,\ldots,m$. Therefore, all centers of mass are placed along the $z$-axis, and bodies are oriented with their symmetry axis along the $w$-axis, as stated.
        
Considering the hypothesis of item $(ii)$ in equations \eqref{eq:RE3} leads to $z_i=w_j=0$ for all $i=2,\ldots,N$ and $j=1,\ldots,m$, which implies that the position vectors and symmetry axes lie in the same horizontal plane.

Finally, if $Rank(M_1)=m+n-1$, then $\xi = \lambda \eta$ and hence the horizontal projections of all vectors are co-linear. That is to say, $(x_i,y_i)=\lambda(u_j,v_j)$ for $i=2,\dots,N$ and $  j=1,\dots,m$. Therefore, all position vectors and orientations lie in a common plane containing $\pmb\omega$. Moreover, considering the relation $L_{j3}=\Theta_{i3}\omega w_i$ derived in the proof of Theorem~\ref{theorem:Cond}, we have that $L_{j3}=0$ implies $w_i=0$ and hence the orientation vectors are parallel.
\end{proof}

\begin{remark}
The configurations described in item (i) are called piled-up equilibria. For $N$ small and spherical bodies, piled-up equilibria have been reported in \cite{Scheeres2012}.

In a relative equilibrium, the orbital motion of the bodies may lie in different parallel planes, leading to two types of equilibria: Lagrangian (same plane) and non-Lagrangian (different parallel planes). These types of equilibria have been discovered and rediscovered in the literature \cite{Abolenaga1979,Barkin1985,WangA,WangB,Maciejewski1995,Maciejewski2010} under various names, such as great-circle, cylindrical precession, or coplanar precession for Lagrangian equilibria, and non-great-circle or conic precession for non-Lagrangian ones. The configurations described in item (ii) correspond to the Lagrangian type.

Item (iii) generalizes the co-linear solution of the $N$-body problem for point masses.
\end{remark}

The rank of $M_1$ can be adjusted with the parameter $\omega$ or $\pmb\mu$. Fixing the potential and the coefficients of its derivative requires fixing the $SO(3)$ invariants of the position and orientation vectors, that is the set of all their scalar products. After that there is still the freedom to rotate all vectors. Since we already fixed the rotation axis $\pmb\omega \parallel \mathbf e_z$ the remaining freedom is to rotate about the $z$-axis. 

\section{Necessary conditions for the existence of relative equilibria}
In this section, we analyze the solvability of system \eqref{eq:RE3}, which will provide us with necessary conditions for the existence of solutions. For this purpose, we will interchange the roles of $\pmb\Xi$ and $\pmb\Sigma$. Hence, we consider the equivalent system 
\begin{equation}
\label{eq:REinverted}
M_{\pmb\Xi}\,\pmb\Sigma=\pmb\varrho_{\pmb\Xi}
\end{equation}
where
$$M_{\pmb\Xi}=\dfrac{\partial (M_{\pmb\Sigma}\pmb\Xi)}{\partial\, \pmb\Sigma},\quad \pmb\varrho_{\pmb\Xi}=M_{\pmb\Sigma}\,\pmb\Xi-M_{\pmb\Xi}\,\pmb\Sigma.$$
In system \eqref{eq:REinverted}, we consider the coefficients $\pmb\Sigma$ as the indeterminate variables and the old variables $\pmb\Xi$ as coefficients in system \eqref{eq:REinverted}. This manipulations allow us to provide necessary conditions regarding the compatibility of system \eqref{eq:RE3}. The matrix $M_\Xi$ only depends on $\pmb\Xi$, all other parameters $\omega \pmb k$ and $\omega^2 \mathbf m$ appear on the right hand side $\pmb\rho_\Xi$, along with $x_i, y_i$, and $w_j$. The purpose of this rewriting is to obtain universal solvability conditions, i.e., conditions on the variables $\pmb\Xi$ that are necessary for any potential with the assumed $SO(3)$ symmetry.

\begin{Proposition}
Let $\pmb\Xi$ be a relative equilibria configuration, and let $W_{\pmb\Xi}$ be the vectorial space generated by the columns of the matrix $M_{\pmb\Xi}$. Then, for every vector $\mathbf v_{\pmb\Xi}\in W_{\pmb\Xi}^\bot$ we have 
$$\langle \pmb\varrho_{\pmb\Xi},\mathbf v_{\pmb\Xi}\rangle=0.$$
\end{Proposition}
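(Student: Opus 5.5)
The argument is a Fredholm-alternative (solvability) condition applied to the rewritten system \eqref{eq:REinverted}. The key structural fact is that \eqref{eq:RE3} is affine in the coefficient vector $\pmb\Sigma$ once the configuration $\pmb\Xi$ is held fixed. I will first check this affine dependence, then observe that a relative equilibrium yields a solution of \eqref{eq:REinverted}, and finally conclude by orthogonality.

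First I will verify that each entry of $M_{\pmb\Sigma}\pmb\Xi$ is an affine function of $\pmb\Sigma$ with coefficients depending only on $\pmb\Xi$ and the fixed physical parameters. This can be read off from \eqref{eq:RE3aux}. The unknowns $a_{il}$, $b_{ij}$, $d_{jk}$ and $\mu_j$ enter only linearly, multiplied by components of $\mathbf q_{1l}$ and $\mathbf a_k$. The remaining terms, $-\frac{\omega^2}{N}(m_i x_i+\tau_i\sum_k m_k x_k)$ and the analogous $y$-terms, together with $-\omega k_j w_j$, do not involve $\pmb\Sigma$ at all. Hence $M_{\pmb\Sigma}\pmb\Xi = M_{\pmb\Xi}\pmb\Sigma + \pmb r(\pmb\Xi)$, where $M_{\pmb\Xi}=\partial(M_{\pmb\Sigma}\pmb\Xi)/\partial\pmb\Sigma$ is independent of $\pmb\Sigma$. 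It follows that $M_{\pmb\Sigma}\pmb\Xi-M_{\pmb\Xi}\pmb\Sigma=\pmb r(\pmb\Xi)$ does not depend on $\pmb\Sigma$. This is the only point that needs care, because the definition of $\pmb\varrho_{\pmb\Xi}$ is stated with a sign convention. Taking the paper's definition literally, at an equilibrium we have $M_{\pmb\Sigma}\pmb\Xi=0$, so $\pmb\varrho_{\pmb\Xi}=-M_{\pmb\Xi}\pmb\Sigma$. The system \eqref{eq:REinverted} then reads $M_{\pmb\Xi}\pmb\Sigma=\pmb\varrho_{\pmb\Xi}$ up to this sign. Since the conclusion is orthogonality, the sign is irrelevant.

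Second, I will use that $\pmb\Xi$ is a relative equilibrium configuration. By Theorem~\ref{theorem:Cond} and the derivation of \eqref{eq:RE2}--\eqref{eq:RE3} from \eqref{eq:PartialsV}, there exists a specific coefficient vector $\pmb\Sigma^*$, built from the partial derivatives of $\tilde V$ evaluated at the invariants of $\pmb\Xi$ together with the multipliers $\mu_j$, such that $M_{\pmb\Sigma^*}\pmb\Xi=0$. By the previous step this gives $M_{\pmb\Xi}\pmb\Sigma^*=-\pmb r(\pmb\Xi)$, which is $\pm\pmb\varrho_{\pmb\Xi}$. Therefore $\pmb\varrho_{\pmb\Xi}$ lies in the column space $W_{\pmb\Xi}$.

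Finally, for any $\mathbf v_{\pmb\Xi}\in W_{\pmb\Xi}^\perp$ we have $\langle\pmb\varrho_{\pmb\Xi},\mathbf v_{\pmb\Xi}\rangle=\pm\langle M_{\pmb\Xi}\pmb\Sigma^*,\mathbf v_{\pmb\Xi}\rangle=\pm\langle\pmb\Sigma^*,M_{\pmb\Xi}^T\mathbf v_{\pmb\Xi}\rangle=0$, because $W_{\pmb\Xi}^\perp=\ker M_{\pmb\Xi}^T$. This is the elementary direction of the Fredholm alternative, so no obstacle is expected there. The main work is the bookkeeping in the first step: confirming from \eqref{eq:RE3aux} that no product of two entries of $\pmb\Sigma$ occurs, in particular that $\mu_j$ multiplies only $u_j$, $v_j$, $w_j$ and never $k_j$. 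This holds because $k_j$ and $\omega$ are fixed physical parameters.
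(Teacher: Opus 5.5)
Your proposal is correct and follows the paper's own one-line Fredholm-alternative argument: a relative equilibrium supplies a coefficient vector $\pmb\Sigma^*$ solving the affine system, so $\pmb\varrho_{\pmb\Xi}\in W_{\pmb\Xi}$ and is therefore orthogonal to $W_{\pmb\Xi}^\perp=\ker M_{\pmb\Xi}^T$. Your extra bookkeeping spells out what the paper leaves implicit: the affine dependence on $\pmb\Sigma$, and the observation that the literal definition of $\pmb\varrho_{\pmb\Xi}$ is off by a sign from \eqref{eq:REinverted}, which does not matter for an orthogonality statement.
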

\begin{Proof}
It is a standard fact from linear algebra that the solvability condition for a linear inhomogeneous equation is that the right hand side $\pmb\rho_\Xi$ must be orthogonal to $W_\Xi^\perp = Kern( M_\Xi^T )$.
\end{Proof}

In practice, we will employ the above proposition to obtain the mentioned necessary conditions. Precisely, we proceed in the following way: for a given basis $\{\mathbf v_{\pmb\Xi}^1,\ldots,\mathbf v_{\pmb\Xi}^r \}$ of  $W_{\pmb\Xi}^\bot$, we have the following $r$ necessary conditions for the vector $\pmb\Xi$ being a relative equilibria 
\begin{equation}
\label{eq:NecCond}
\langle \pmb\varrho_{\pmb\Xi},\mathbf v_{\pmb\Xi}^i\rangle=0,\quad i=1,\ldots,r.
\end{equation}
The relative equilibrium conditions \eqref{eq:RE} are a system of $3(n+m) $ non-linear equations in $3(n+m)$ variables. We interpret it as a homogeneous system of linear equations \eqref{eq:RE3} in $3(n+m)$ variables $\pmb\Xi$, and as an inhomogeneous system of $3(n+m)$ linear equations \eqref{eq:REinverted} in $n^2+nm+m^2$ variables $\pmb\Sigma$. For $N=n+1=2$, $m=1,2$  and for $N=n+1=3$, $m=1$ this is useful because the kernel of $M_\Xi$ can reveal universal conditions on the solutions. For $N=3$ and $m=0$, i.e. the usual 3-body problem, the result of this approach is the well known statement that rotating relative equilibria are planar.
\subsection{General procedure for necessary conditions}
 \label{sec:GeneralProc}
Employing the previous development, we propose the following procedure to obtain necessary conditions for relative equilibria in a system consisting of $n$ bodies. We will assume that there are $m$ bodies ($m\leq n$) with an axial-symmetric mass distribution and $n-m$ with a spherically symmetric mass distribution. Considering Theorem~\ref{theorem:Cond}, we only have to impose conditions on the configuration space of the translational and rotational variables $\mathbf q_i$ and  $\mathbf a_j$, where $i=1,\ldots,n$ and $j=1,\ldots,m$. The following notation will be useful in separating our analysis in several cases
$$\mathcal A:=\{\mathbf q_{12},\ldots,\mathbf q_{1N},\mathbf a_1,\ldots,\mathbf a_m,\mathbf e_3\},$$
where $\mathbf e_3=(0,0,1)^T$, and we refer to the $i$-th element in $\mathcal A$ as $a_i$, with $i=1,\ldots,n+m+1$. Then, given a set of natural ordered numbers $\mathcal I\subset\{1,\ldots,n+m+1\}$ satisfying that $|\mathcal I|\geq2$, we define $\mathcal A_{\mathcal I}\subset \mathcal A$ as $\mathcal A_{\mathcal I}:=\bigcup_{i\in\mathcal I} a_i,$ and we consider the dimension of each subspace $span\{\mathcal A_{\mathcal I}\}$
$$\rho(\mathcal I):=Dim(span\{\mathcal A_{\mathcal I}\}).$$
Notice that there are exactly $h:=2^{n+m+1}-(n+m+2)$ different possibilities for $\mathcal I$. Now, we give the 4-step procedure to identify allowed configurations.

\begin{description}
  \item[Step 1] We distinguish each possible case by fixing all the dimensions $(\rho(\mathcal I_1),\ldots,\rho(\mathcal I_h))$.
  \item[Step 2] For each case, we impose the fixed dimension conditions $(\rho(\mathcal I_1),\ldots,\rho(\mathcal I_h))$ on system \eqref{eq:REinverted} and compute a basis $\{\mathbf v_{\pmb\Xi}^1,\ldots,\mathbf v_{\pmb\Xi}^r \}$ of the null-space  associated with the columns of matrix $M_{\pmb\Xi}$.
  \item[Step 3] By computing \eqref{eq:NecCond} we obtain necessary conditions on the variables.
  \item[Step 4] After assuming conditions of step 3 in system \eqref{eq:REinverted}, we check if there are further restrictions to obtain well defined solutions for \eqref{eq:REinverted}.
\end{description}

\begin{remark}
 Notice that steps 3 and 4 simplify considerably for the case $\mathbf s=0$. Precisely, all the involved operations become linear for these steps. 
\end{remark}

\section{Analysis of a sphere and an axial-symmetric body}
 \label{sec:SphereAxSyBodyAnalysis}
 We analyze the case of two bodies, one with spherical symmetry and the other with mass axially symmetrically distributed. Under these assumptions $n=m=1$, $h=2^{n+m+1}-(n+m+2)=4$, and the position and orientation vectors may be expressed without sub-indices as $\mathbf q=(x,y,z)$ and $\mathbf a=(u,v,w)$. Thus, we have that $\mathcal A=\mathcal A_{\mathcal I_1}=\{\mathbf q,\mathbf a,\mathbf e_3\}$, $\mathcal A_{\mathcal I_2}=\{\mathbf q,\mathbf a\}$, $\mathcal A_{\mathcal I_3}=\{\mathbf q,\mathbf e_3\}$ and $\mathcal A_{\mathcal I_4}=\{\mathbf a,\mathbf e_3\}$.
 
In \cite{Kinoshita1970}, the author considered a second-order Legendre expansion of the potential of a sphere and an axial-symmetric ellipsoid and proved the existence of several types of relative equilibria. In particular, Kinoshita classified the relative equilibria in three categories: spoke, float, and arrow relative equilibria, which correspond with cases 3, 4, and 6, respectively, in our following development. The list given by \cite{Kinoshita1970} needs to be completed. \cite{Scheeres2012} studied equilibria in which the bodies touch each other; they correspond with our cases 1 and 2. All these relative equilibria are Lagrangian, i.e., the centers of mass of all bodies lie in the same plane. Nevertheless, relative equilibria exist where the plane containing the curve described by the center of mass of one body does not contain the center of mass of the other; these are non-Lagrangian relative equilibria. The existence of the non-Lagrangian type took a long debate until these equilibria were discovered and rediscovered in several works \cite{Abolenaga1979,Barkin1985,WangA,WangB,Maciejewski2010}. 
 
Now, we carry out Step 1 from the procedure described above and study each of the possible combinations for the dimensions  $\rho(\mathcal I_i)$ given in Table~\ref{table:T1}. Then, we will apply each of the above cases to system \eqref{eq:REinverted}, which for the case of a sphere and an axial-symmetric body reads as follows 
\begin{equation}
    \label{eq:REinvertedSphereAxisB}
    \left(\begin{array}{ccc}
 2 x & u & 0 \\
 0 & x & u \\
 2 y & v & 0 \\
 0 & y & v \\
 2 z & w & 0 \\
 0 & z & w \\
\end{array}
\right)\, \left(\begin{array}{c}
 a_{22}\\
 b_{2 1}\\
 \mu_1 \\
\end{array}
\right)= \left(\begin{array}{c} -\omega^2m_{12} x\\
0\\
-\omega^2 m_{12}  y\\
0\\
0\\
-\omega\,k_1\, w\\
\end{array}
\right),
\end{equation}
where $m_{12}=m_1m_2/(m_1+m_2)$.

In the subsequent discussion of the cases listed in Table~\ref{table:T1}, we will consider the following notation: by the symbol $\mathbf e_i^j$ we refer to the $i$-th vector of the canonical base of $\mathbb R^j$. 
    \begin{table}[h]
	\begin{center}
		\begin{tabular}{| c| c | c |c|c|}\hline
	$N=n+1=2,\,\,m=1$		  & $\rho(\{\mathbf q,\mathbf a,\mathbf e_3\})$ & $\rho(\{\mathbf q,\mathbf a\})$&$\rho(\{\mathbf q,\mathbf e_3\})$&$\rho(\{\mathbf a,\mathbf e_3\})$ \\ \hline\hline
			Case 1: arrow &3 & 2&2&2 \\\hline
			Case 2: conic &2 & 2&2&2 \\\hline
			Case 3: float &2 & 2&2&1 \\\hline
			Case 4: spoke &2 & 1&2&2 \\\hline
			Case 5: piled up &2 & 2&1&2 \\\hline
			Case 6: piled up &1 & 1&1&1 \\\hline
		\end{tabular}
	\end{center}
    \caption{Combinations for the dimensions  $\rho(\mathcal I_i)$ in the case of a sphere and an axial-symmetric body.}
 \label{table:T1}
\end{table}
  
 \begin{description}
\item[Case 1] In this case all three vectors are independent, which does not imposes extra conditions on system  \eqref{eq:REinverted}. Hence, carrying out step 2 implies computing the null-space associated to matrix $M_\Xi$ in system \eqref{eq:REinvertedSphereAxisB}. This vectorial space is generated by the following vectors
 $$\mathbf v_{\pmb\Xi}^1= \frac{y\, h_2}{u\,h_3}\mathbf e_1^6 -\frac{w}{u}\mathbf e_2^6-\frac{x\, h_2}{u\,h_3} \mathbf e_3^6+\mathbf e_6^6,\quad \mathbf v_{\pmb\Xi}^2= \frac{ h_1}{u\,h_3}\mathbf e_1^6 -\frac{h_1}{u\,h_3} \mathbf e_3^6+\mathbf e_5^6,\quad \mathbf v_{\pmb\Xi}^3=-\frac{y}{u}\mathbf e_1^6-\frac{v}{u}\mathbf e_2^6+\frac{x}{u}\mathbf e_3^6+\mathbf e_4^6,$$
 where $(h_1,h_2,h_3)=\mathbf q\times \mathbf a$. Moreover, the right hand side in system  \eqref{eq:REinverted} is  $\pmb\varrho_{\pmb\Xi}=\omega^2( m_{12}( x\,\mathbf e_1^6+ y\,\mathbf e_3^6)-k_1/\omega\, w\, \mathbf e_6^6).$ Then, we compute the necessary conditions from step~3 
 $$\langle \pmb\varrho_{\pmb\Xi},\mathbf v_{\pmb\Xi}^1\rangle=-\omega \,k\,w=0,\quad \langle\pmb\varrho_{\pmb\Xi},\mathbf v_{\pmb\Xi}^2\rangle=-\omega ^2 m_{12} z=0,\quad \langle\pmb\varrho_{\pmb\Xi},\mathbf v_{\pmb\Xi}^3\rangle=0,$$
 which determine the conditions $z=w=0$. 

 Finally, we execute step 4. It requires imposing the conditions $z=w=0$ in system~\eqref{eq:RE3}, which leads to the following equations:
\begin{gather}
    \begin{aligned}
        x  \left(2 a_{22}- m_{12}   \omega ^2\right)+u\,  b_{21}=&\,0,\quad
        x\,  b_{21}+\mu_1\,   u =\,0,\\
        y  \left(2 a_{22}- m_{12}   \omega ^2\right)+v \, b_{21}=&\,0,\quad
        y\,  b_{21}+\mu_1\,   v =0.
    \end{aligned}
\end{gather}
 Solving the above equation in the coefficients $(a_{22},b_{21},\mu_1)$, we finally obtain the following relations
\begin{equation}
    \label{eq:ExtraCaso1}
    a_{22}= \frac{1}{2} m_{12} \omega ^2,\quad b_{21}=\mu_1= 0.
\end{equation}

 Summarizing, we have that case 1 imposes conditions \eqref{eq:ExtraCaso1}, which can be verified once the potential function is specified, and $z=w=0$. Therefore, this type of relative equilibria are restricted to be of the Lagrangian type. For the special cases in which $\mathbf q $ and $ \mathbf a$ are aligned or perpendicular, Kinoshita \cite{Kinoshita1970} reported these equilibria as spoke and arrow respectively. In \cite{Maciejewski2010} the authors report \emph{inclined coplanar precessions}, which are of the Lagrangian type, with the symmetry axis $\mathbf a$ pointing in an arbitrary direction in space. However, our analysis shows that $w=0$, implying that $\mathbf a$ must lie in the orbital plane rather than having a spatial arbitrary inclination.
\begin{figure}[]
		\begin{center}
			\begin{tikzpicture}[scale=0.9]
				\node at (0,0) {\includegraphics[width=240pt]{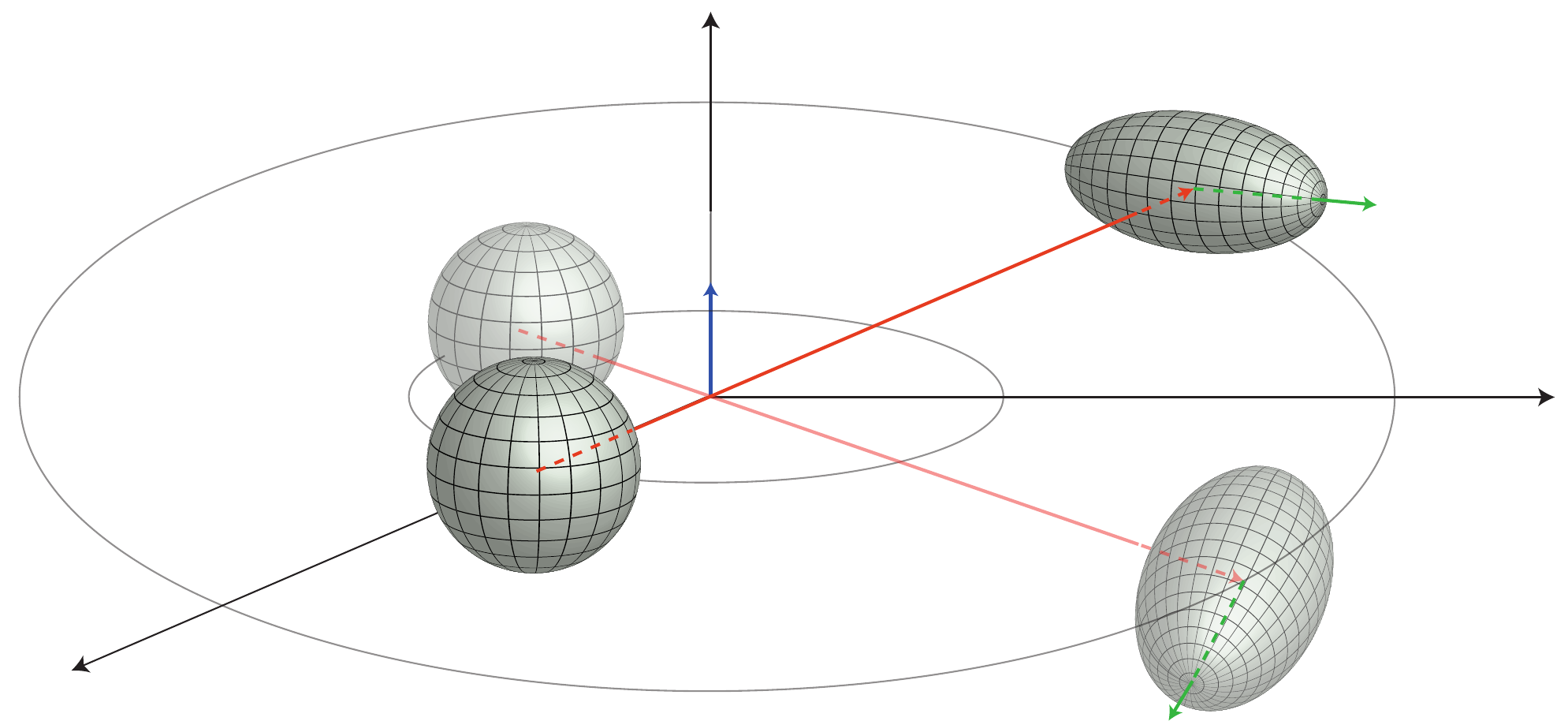}}; 
				\coordinate [label=below:\textcolor{black}{$\mathbf e_3$}] (E2) at (-0.18,0.7);
				\coordinate [label=below:\textcolor{black}{$\mathbf q$}] (E2) at (0.9,-0.8);
				\coordinate [label=below:\textcolor{black}{$\mathbf a$}] (E2) at (2.2,-2.0);
			\end{tikzpicture}
		\end{center}
		\caption{Case 1. Extra conditions $w= z =0$. Only Lagrangian equilibria type. When $\mathbf q\cdot \mathbf a=0$, this configuration corresponds with the arrow equilibria reported in \cite{Kinoshita1970}. }
		\label{fig:Arrow}
	\end{figure}

 \begin{figure}[h!]
		\begin{center}
			\begin{tikzpicture}[scale=0.6]
				\node at (0,0) {\includegraphics[width=240pt]{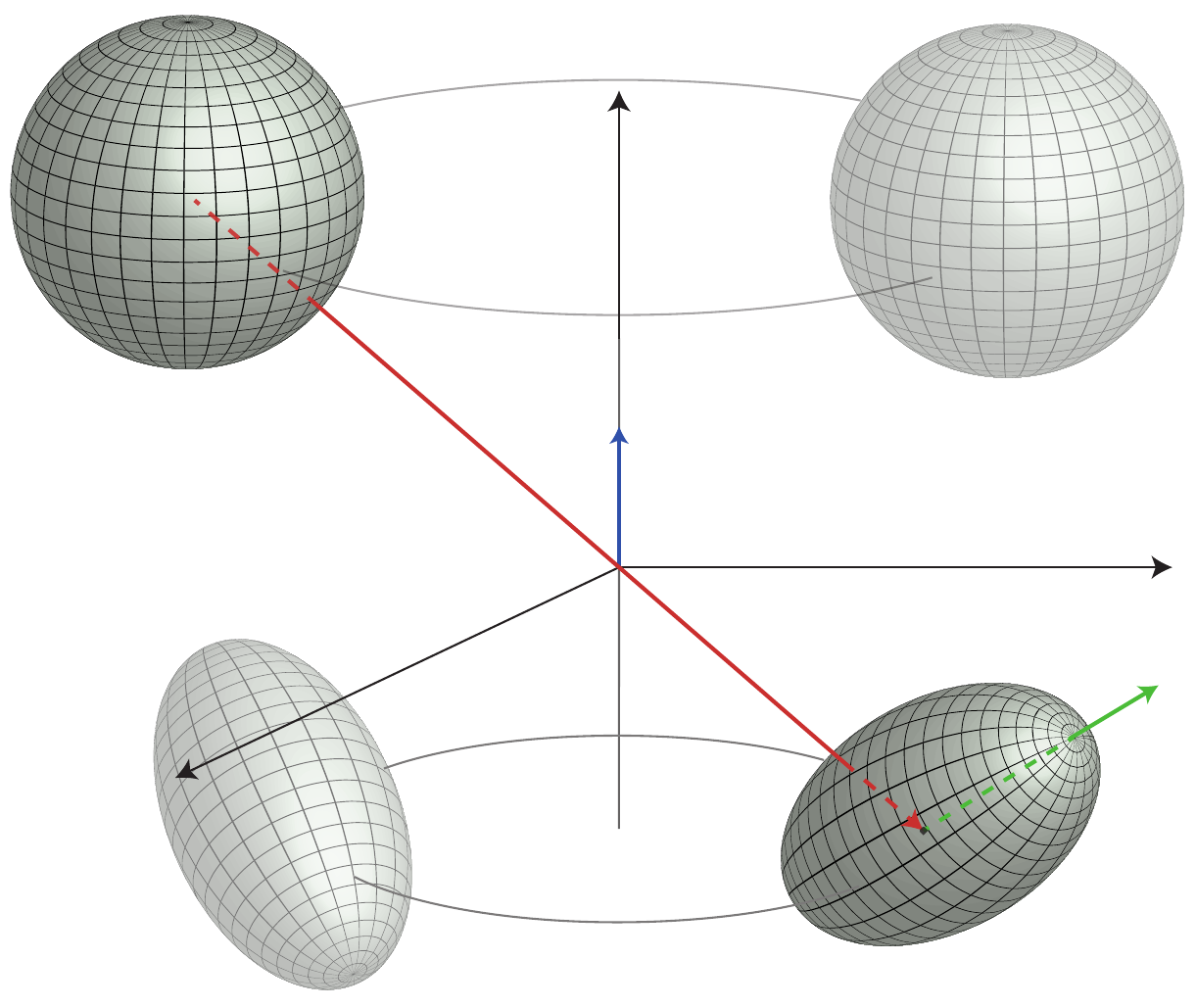}}; 
				\coordinate [label=below:\textcolor{black}{$\mathbf e_3$}] (E2) at (0.7,0.9);
				\coordinate [label=below:\textcolor{black}{$\mathbf q$}] (E2) at (0.6,-1.1);
				\coordinate [label=below:\textcolor{black}{$\mathbf a$}] (E2) at (6.0,-1.4);
			\end{tikzpicture}
		\end{center}
		\caption{Conic equilibria, case 2, $\mathbf q \times \mathbf a \perp \mathbf e_3$. Extra condition $w\,k_1/\omega-\lambda\,m_{12}\, z=0$. Non-Lagrangian type allowed.}
		\label{fig:Conic}
	\end{figure}
 \item[Case 2] This case implies that the three vectors $(\mathbf q, \mathbf a, \mathbf e_3)$ are on the same plane, but also there is no any pair of parallel vectors. These requirements impose that $(x,y)=\lambda(u,v)$ for some $\lambda\in\mathbb R$, $z\neq \lambda w$, and $\mathbf q \times \mathbf a \perp \mathbf e_3$. Considering these conditions on matrix $M_\Xi$ in system \eqref{eq:REinvertedSphereAxisB} leads to 
 $$M_\Xi=\left(
\begin{array}{ccc}
 2 \lambda  u  & u  & 0 \\
 0 & \lambda  u  & u  \\
 2 \lambda  v  & v  & 0 \\
 0 & \lambda  v  & v  \\
 2 z  & w  & 0 \\
 0 & z  & w  \\
\end{array}
\right).$$
The right hand side of system \eqref{eq:REinvertedSphereAxisB} is $\pmb\varrho_{\pmb\Xi}=\omega^2 \lambda\,m_{12}( u\,\mathbf e_1^6+ v\,\mathbf e_3^6-\omega\,k_1\, w\, \mathbf e_6^6)$, and the corresponding 3-dimensional null-space of matrix $M_\Xi$ is 
  $$\mathbf v_{\pmb\Xi}^1= -\frac{z}{u}\mathbf e_1^6 -\frac{w}{u}\mathbf e_2^6+\lambda \mathbf e_5^6+\mathbf e_6^6,\quad \mathbf v_{\pmb\Xi}^2= -\frac{v}{u} \mathbf e_2^6+\mathbf e_4^6,\quad \mathbf v_{\pmb\Xi}^3=-\frac{v}{u}\mathbf e_1^6+\mathbf e_3^6.$$
Thus, the null-space condition becomes
 $$\langle \pmb\varrho_{\pmb\Xi},\mathbf v_{\pmb\Xi}^1\rangle=\omega ^2 (\lambda \,m_{12} z-w\,k_1/\omega)=0,\quad \langle \pmb\varrho_{\pmb\Xi},\mathbf v_{\pmb\Xi}^2\rangle=\langle \pmb\varrho_{\pmb\Xi},\mathbf v_{\pmb\Xi}^3\rangle=0.$$
That is to say, $\lambda \,m_{12} z-w\,k_1/\omega=0$ is a necessary condition for the existence of this kind of relative equilibria.

Again, step 4 requires imposing the condition $\lambda \,m_{12} z-w\,k_1/\omega=0$ in system~\eqref{eq:RE3}, and solving this system for the coefficients $(a_{22},b_{21},\mu_1)$. However, this operation does not provide new conditions.

This case complements the previous literature, since non-Lagrangian equilibria only happen for the particular relation $z=\frac{k_1\, w}{\lambda\,\omega  \,m_{12}  }$, which has not been reported before. We illustrate this configuration in Figure~\ref{fig:Conic}.

 \begin{figure}[h!]
		\begin{center}
			\begin{tikzpicture}[scale=0.85]
				\node at (0,0) {\includegraphics[width=230pt]{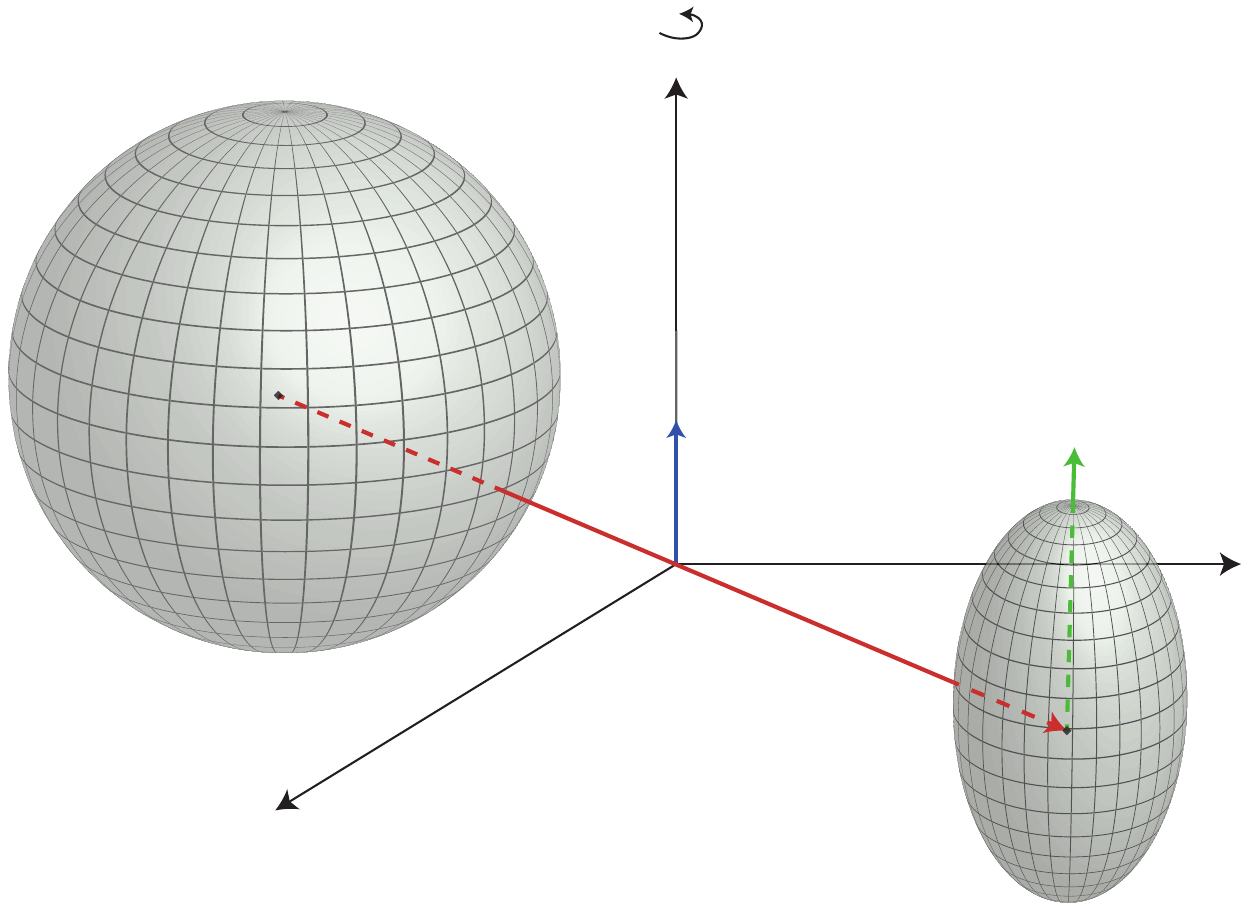}}; 
				\coordinate [label=below:\textcolor{black}{$\mathbf e_3$}] (E2) at (0.9,0.6);
				\coordinate [label=below:\textcolor{black}{$\mathbf q$}] (E2) at (1.8,-1.7);
				\coordinate [label=below:\textcolor{black}{$\mathbf a$}] (E2) at (3.8,0.7);
			\end{tikzpicture}
		\end{center}
		\caption{Float equilibria, case 4, $\mathbf a \parallel \mathbf e_z$. Extra condition $z=0$. Only Lagrangian equilibria are allowed.}
		\label{fig:Float}
	\end{figure} 
 \item[Case 3] The dimensions condition requires $\mathbf a \parallel \mathbf e_z$, hence $u=v=0$. Assuming $x\neq0$, we have $\pmb\varrho_{\pmb\Xi}=\omega^2( m_{12}( x\,\mathbf e_1^6+ y\,\mathbf e_3^6)-k_1/\omega\, w\, \mathbf e_6^6)$, and the corresponding null-space is spanned by  
  $$\mathbf v_{\pmb\Xi}^1= -\frac{z}{x}\mathbf e_1^6 -\frac{w}{x}\mathbf e_2^6+\mathbf e_6^6,\quad \mathbf v_{\pmb\Xi}^2= -\frac{y}{x} \mathbf e_2^6+\mathbf e_4^6,\quad \mathbf v_{\pmb\Xi}^3=-\frac{y}{x}\mathbf e_1^6+\mathbf e_3^6.$$
Proceeding with Step 3 leads to the relation $\langle \pmb\varrho_{\pmb\Xi},\mathbf v_{\pmb\Xi}^1\rangle=-\omega ^2 m_{12}\,z=0.$ Moreover, proceeding as in the previous cases, step 4 provides the following conditions
$$a_{22}= \frac{1}{2} m_{12} \omega ^2,\quad b_{21}= 0,\quad \mu_1= k_1\, \omega .$$

It implies that this type of relative equilibrium is of the Lagrangian type, namely, we have the condition $z=0$. In Figure~\ref{fig:Float} we illustrate this configuration, which was named float equilibrium by Kinoshita \cite{Kinoshita1970}. 

 \begin{figure}[h!]
		\begin{center}
			\begin{tikzpicture}[scale=0.8]
				\node at (0,0) {\includegraphics[width=230pt]{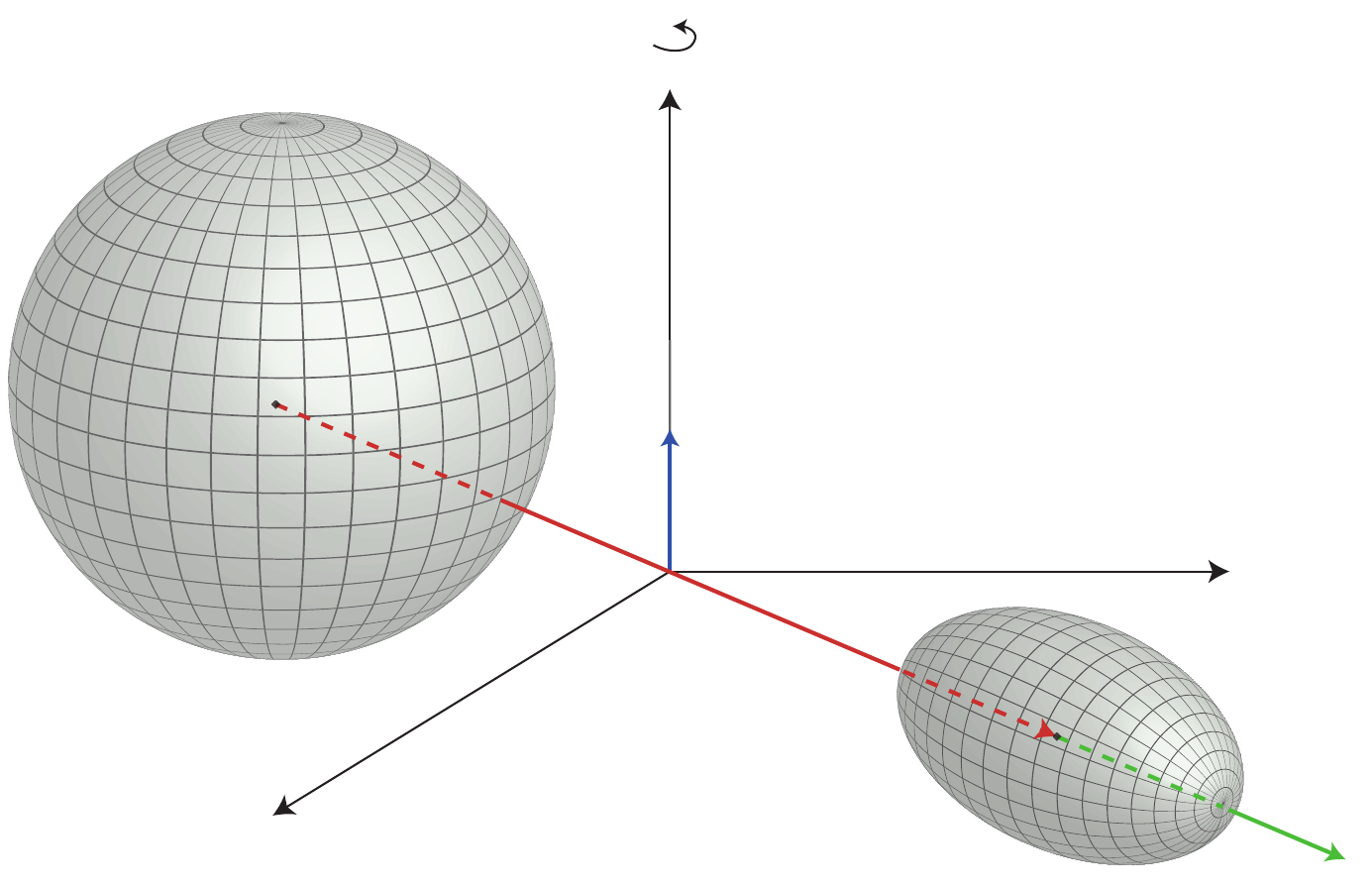}}; 
				\coordinate [label=below:\textcolor{black}{$\mathbf e_3$}] (E2) at (0.2,0.55);
				\coordinate [label=below:\textcolor{black}{$\mathbf q$}] (E2) at (1.1,-0.85);
				\coordinate [label=below:\textcolor{black}{$\mathbf a$}] (E2) at (4.5,-2.0);
			\end{tikzpicture}
		\end{center}
	\caption{Spoke equilibria, case 3, $\mathbf q \parallel \mathbf a$. Extra conditions $w=z=0$. Only Lagrangian equilibria are allowed.}
	\label{fig:Spoke}
	\end{figure}

  \item[Case 4] In this case $\mathbf q = \lambda \mathbf a$ for some non-zero $\lambda\in\mathbb R$. As a result the null-space is a 4-dimensional space with basis $\{\mathbf v_{\pmb\Xi}^1,\ldots,\mathbf v_{\pmb\Xi}^4 \}$. Assuming $u\neq0$, these vectors read as follows
$$\mathbf v_{\pmb\Xi}^1=-\frac{w }{u }\mathbf e_2^6+\mathbf e_6^6,\quad \mathbf v_{\pmb\Xi}^2=-\frac{w }{u }\mathbf e_1^6+\mathbf e_5^6,\quad \mathbf v_{\pmb\Xi}^3=-\frac{v }{u }\mathbf e_2^6+\mathbf e_4^6,\quad \mathbf v_{\pmb\Xi}^4=-\frac{v }{u }\mathbf e_1^6+\mathbf e_3^6,$$
and $\pmb\varrho_{\pmb\Xi}=\omega ^2( \lambda\,   m_{12}   u  ,0, \lambda\,   m_{12}   v  ,0,0,-k_1/\omega\,   w)$. Proceeding with Step 3 leads to the following relations
$$\langle \pmb\varrho_{\pmb\Xi},\mathbf v_{\pmb\Xi}^1\rangle=-\omega \,k_1\,w=0, \quad \langle \pmb\varrho_{\pmb\Xi},\mathbf v_{\pmb\Xi}^2\rangle=-\lambda\,\omega ^2m_{12}\,w=0,\quad \langle \pmb\varrho_{\pmb\Xi},\mathbf v_{\pmb\Xi}^3\rangle=\langle \pmb\varrho_{\pmb\Xi},\mathbf v_{\pmb\Xi}^4\rangle=0,$$
which implies $w=0$ and because $\mathbf q=\lambda \mathbf a$ also  $z=0$. Moreover, Step 4 does not add new constraints. Figure~\ref{fig:Spoke} illustrates this configuration, which has been named as spoke equilibrium \cite{Kinoshita1970}.

  \item[Case 5] The dimension condition requires $\mathbf q \parallel \mathbf e_z$, hence $x=y=0$. As a result the null-space is a 3-dimensional space generated by $\{\mathbf v_{\pmb\Xi}^1,\mathbf v_{\pmb\Xi}^2,\mathbf v_{\pmb\Xi}^3 \}$. Assuming $u\neq0$, these vectors and $\pmb\varrho_{\pmb\Xi}$ read as follows
  $$\mathbf v_{\pmb\Xi}^1= -\frac{z}{u}\mathbf e_1^6 -\frac{w}{u}\mathbf e_2^6+\mathbf e_6^6,\quad \mathbf v_{\pmb\Xi}^2= -\frac{v}{u} \mathbf e_2^6+\mathbf e_4^6,\quad \mathbf v_{\pmb\Xi}^3=-\frac{v}{u}\mathbf e_1^6+\mathbf e_3^6,\quad \pmb\varrho_{\pmb\Xi}=-\omega\,k_1\, w\, \mathbf e_6^6.$$
Proceeding with Step 3 leads to the following relations
$$\langle \pmb\varrho_{\pmb\Xi},\mathbf v_{\pmb\Xi}^1\rangle=-\omega\,k_1\,w=0,\quad \langle \pmb\varrho_{\pmb\Xi},\mathbf v_{\pmb\Xi}^2\rangle=\langle \pmb\varrho_{\pmb\Xi},\mathbf v_{\pmb\Xi}^3\rangle=0.$$
Moreover, Step 4 imposes the following conditions on the coefficients
$$a_{22}=b_{21}= \mu_1= 0.$$ 
Therefore, for this type of relative equilibria we must impose $w=0$, and the bodies are piled-up with the symmetry axis $\mathbf a$ perpendicular to $\mathbf q$ and $\mathbf e_3$, as it is illustrated in the right image of Figure~\ref{fig:Piled-up1}.

  \begin{figure}[h]
		\begin{center}
			\begin{tikzpicture}[scale=0.8]
				\node at (0,0) {\includegraphics[width=145pt]{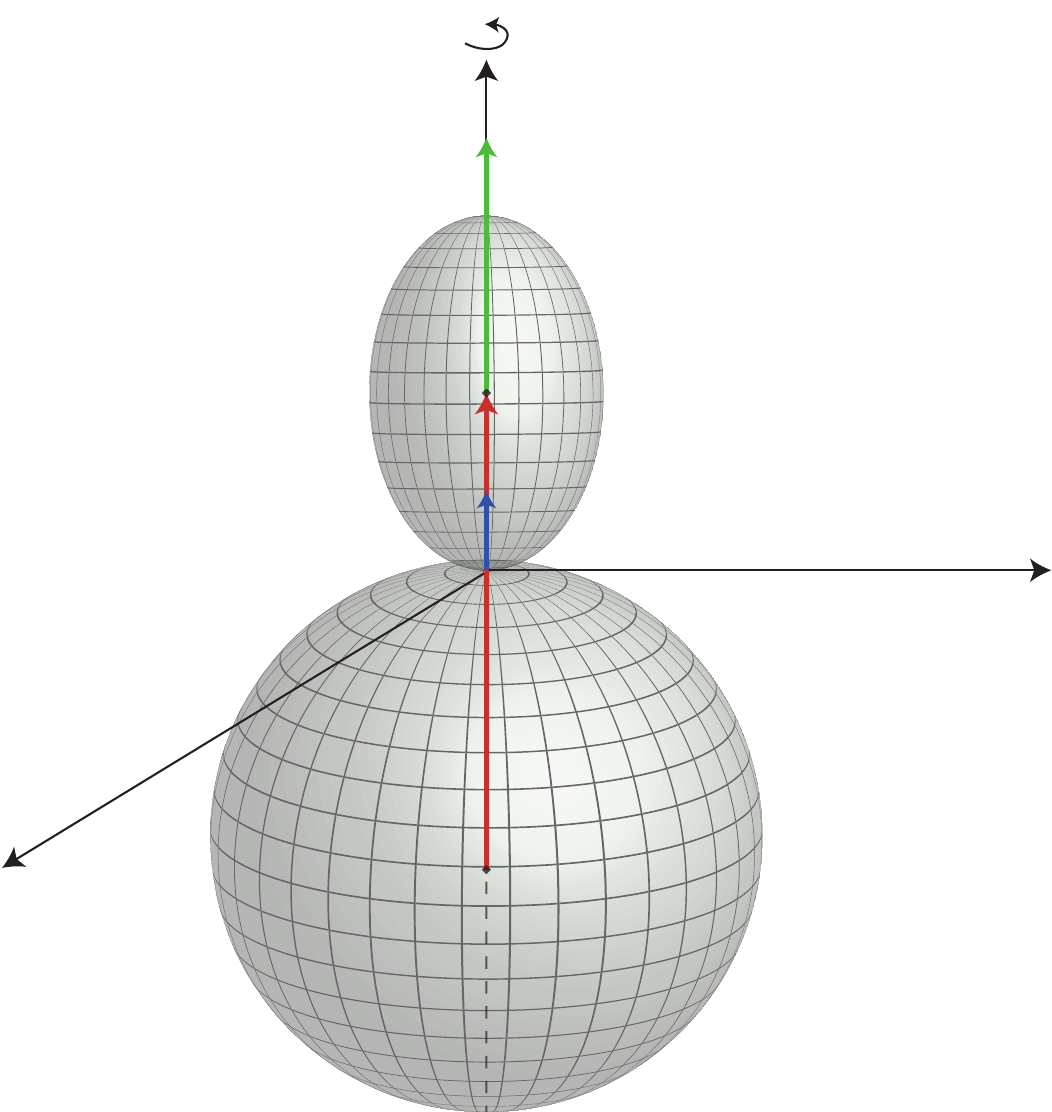}}; 
				\coordinate [label=below:\textcolor{black}{$\mathbf e_3$}] (E2) at (0.1,0.5);
				\coordinate [label=below:\textcolor{black}{$\mathbf q$}] (E2) at (0,1.3);
				\coordinate [label=below:\textcolor{black}{$\mathbf a$}] (E2) at (0,2);
			\end{tikzpicture}\quad 
			\begin{tikzpicture}[scale=0.8]
				\node at (0,0) {\includegraphics[width=145pt]{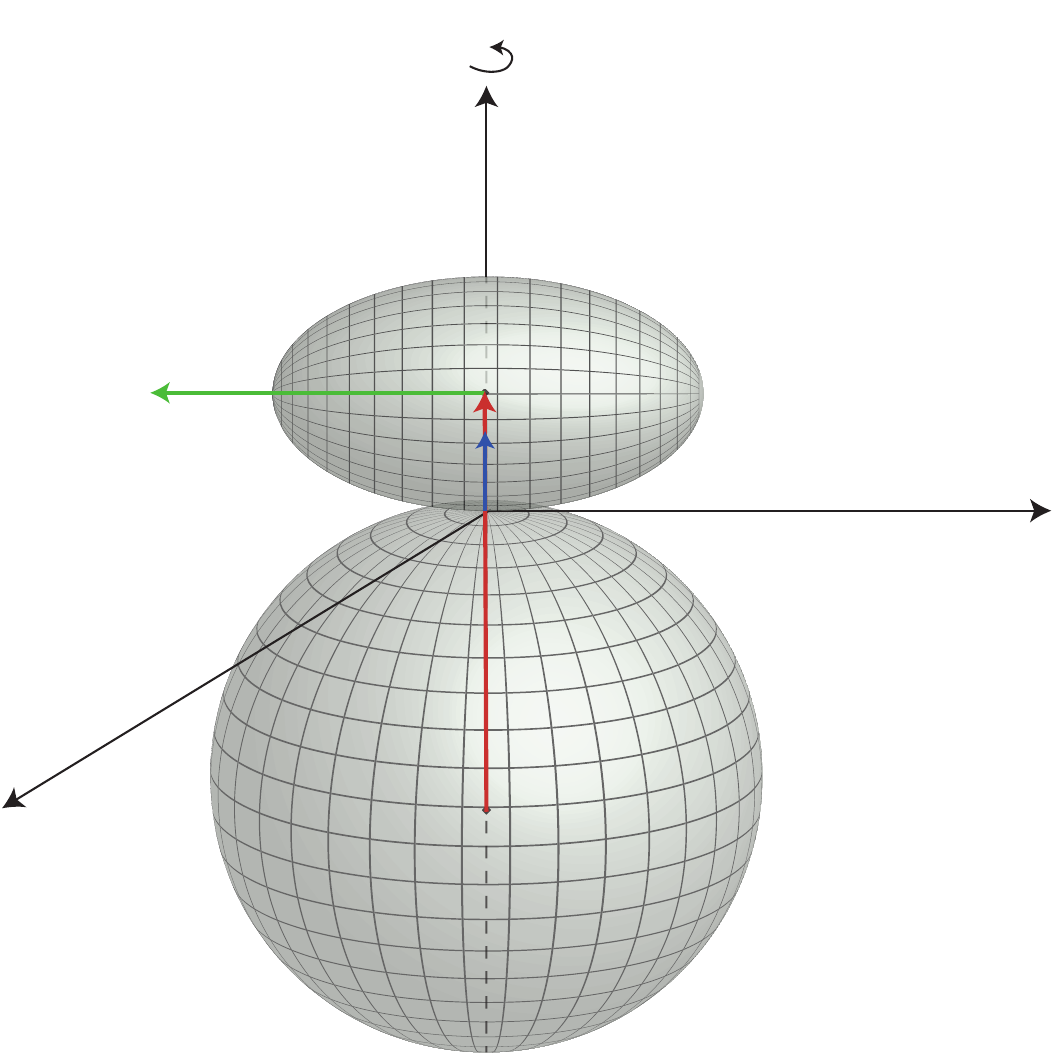}}; 
				\coordinate [label=below:\textcolor{black}{$\mathbf e_3$}] (E2) at (0.1,0.5);
				\coordinate [label=below:\textcolor{black}{$\mathbf q$}] (E2) at (0,1.2);
				\coordinate [label=below:\textcolor{black}{$\mathbf a$}] (E2) at (-2.5,1.3);
			\end{tikzpicture}
		\end{center}
		\caption{Piled-up bodies, case 1 (left) and case 2 (right). These are the only configurations where the bodies touch each other.}
  \label{fig:Piled-up1}
	\end{figure}
  \item[Case 6] In this case $\mathbf q ,\, \mathbf a $ and $ \mathbf e_z$ are parallel. Thus, the bodies are piled-up as in the left image in Figure~\ref{fig:Piled-up1}. For spherical bodies, these equilibria were first reported in \cite{Scheeres2006}. In order to get conditions for this arrangement of the bodies, we compute a basis of the columns null-space $\{\mathbf v_{\pmb\Xi}^1,\ldots,\mathbf v_{\pmb\Xi}^4 \}$ and $\pmb\varrho_{\pmb\Xi}$ as follows
  $$\mathbf v_{\pmb\Xi}^1=\mathbf e_4^6,\quad \mathbf v_{\pmb\Xi}^2=\mathbf e_3^6,\quad \mathbf v_{\pmb\Xi}^3=\mathbf e_2^6,\quad \mathbf v_{\pmb\Xi}^4=\mathbf e_1^6,\quad \pmb\varrho_{\pmb\Xi}=-\omega\,k_1\, w\, \mathbf e_6^6.$$
Step 3 does not provide any condition since the relations \eqref{eq:NecCond} are identically satisfied. Step 4 leads to the following relations 
$$a_{22} = -\frac{w^2 \left(  \omega \,k_1-\mu_1\right)}{2 z^2},\quad b_{21}= \frac{w \left( \omega\,k_1-\mu_1\right)}{z},$$
which encapsulate an intricate non-linear relation in the variables.
 
\end{description}

\section{Sufficient conditions for $N=2$, $m=1$}
\label{sec:Sufficient}

So far, we have been able to establish necessary conditions for the existence of relative equilibria. This section focuses on the case of a sphere and an axisymmetric body. The following result shows that, when the potential energy function is increasing with respect to the distance between the two bodies, the existence of relative equilibria can be ensured. In particular, this means that for any potential function derived from gravitational laws, as well as for any of their approximations, we can determine sufficient conditions leading to relative equilibria.

Before stating the main result of this section, we introduce the following notation. In the case $N=2$ and $m=1$, the potential function depends on the scalar quantities $R^2=\mathbf{q}\cdot \mathbf{q}$ and $R\cos\gamma=\mathbf{q}\cdot\mathbf{a}$, where $R$ denotes the magnitude of the position vector $\mathbf{q}$ and $\gamma$ is the angle between $\mathbf{q}$ and $\mathbf{a}$. Hence, the potential can be written as $V=V(R^2,R\cos\gamma)$.

Moreover, we use the notation
$$\dfrac{\partial V}{\partial R}=V_R=2R\, V_1+V_2 \cos\gamma,$$
where $V_i$ denotes the partial derivative of $V(x,y)$ with respect to its $i$-th argument.

\begin{theorem}
If $V_R>0$, then for any fixed values $R>0$ and $\gamma\in[0,\pi]$, the position and orientation vectors defined in \eqref{eq:RESuff} determine a relative equilibrium configuration, provided the following equation is satisfied and the angular velocity is set to the value given:
$$R^2b^3\,x-a\,u=\dfrac{c^4k_1}{R\,m_{12}\sin\gamma},\quad \omega^2=\dfrac{c^4}{R^2\,m_{12}\,(a+b\cos\gamma)}.$$
Then the solution is
\begin{equation}
    \label{eq:RESuff}
    \mathbf{q}=(x,0,z)=\dfrac{R}{c}(a+b\cos\gamma,0,b\sin\gamma),\quad 
\mathbf{a}=(u,0,w)=\dfrac{1}{c}(b+a\cos\gamma,0,-a\sin\gamma),
\end{equation}
where $a=2R\,V_1$, $b=V_2$, and $c^2=b^2+a^2-2ab\cos(\pi-\gamma)$.
\end{theorem}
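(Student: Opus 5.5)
The plan is to verify directly that the vectors in \eqref{eq:RESuff} satisfy the relative equilibrium equations \eqref{eq:RE} of Theorem~\ref{theorem:Cond} in the case $N=2$, $m=1$. By Theorem~\ref{theorem:Cond}(i), the momenta are then determined by the coordinates, so only part (ii) has to be checked. Since $V=V(R^2,R\cos\gamma)=V(\mathbf q\cdot\mathbf q,\mathbf q\cdot\mathbf a)$, the chain rule gives
$\partial V/\partial\mathbf q=2V_1\mathbf q+V_2\mathbf a$ and $\partial V/\partial\mathbf a=V_2\mathbf q$.
This is exactly the specialization of \eqref{eq:PartialsV}, with $a_{22}=V_1$ and $b_{21}=V_2$. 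With $\pmb\omega=\omega\mathbf e_3$, the equilibrium conditions reduce to three equations:
\[
-\omega^2 m_{12}(x,y,0)+2V_1\mathbf q+V_2\mathbf a=0,\qquad \mu\,\mathbf a-k_1w\,\omega\mathbf e_3+V_2\mathbf q=0 .
\]
The $\mathbf q$-equation is \eqref{eq:RE} after inserting the reduced mass factor, i.e.\ the first block of \eqref{eq:REinvertedSphereAxisB}.

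First I will check the geometric consistency of \eqref{eq:RESuff}. With $a=2RV_1$ and $b=V_2$, the vector $c\,\mathbf a$ equals $(b+a\cos\gamma,0,-a\sin\gamma)$, and its squared norm is $a^2+b^2+2ab\cos\gamma=c^2$, so $|\mathbf a|=1$. Likewise $|\mathbf q|=R$. Next, $\mathbf q\cdot\mathbf a=\frac{R}{c^2}\bigl(ab+(a^2+b^2)\cos\gamma+ab\cos^2\gamma-ab\sin^2\gamma\bigr)$, which simplifies to $R\cos\gamma$. So the invariants are indeed $(R^2,R\cos\gamma)$, and the coefficients $V_1,V_2$ used to build the vectors are consistent with the configuration they define. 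The same computation shows that $2V_1\mathbf q+V_2\mathbf a=\frac{1}{R}(a\mathbf q+Rb\mathbf a)$ has vanishing third component: $\frac{1}{c}(ab\sin\gamma-ab\sin\gamma)=0$. That is precisely why this combination was chosen, since the $z$-component of the $\mathbf q$-equation has no centrifugal term.

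Second, I will handle the horizontal part of the $\mathbf q$-equation. Its $y$-component is trivial because $y=v=0$. Its $x$-component reads $\omega^2 m_{12}x=\frac1R(a x+Rb u)$. Inserting $x$ and $u$ from \eqref{eq:RESuff} and simplifying with $c^2$ should reproduce the stated value of $\omega^2$. The hypothesis $V_R>0$ enters here: it guarantees that the right-hand side is positive, so a real $\omega$ exists. I would use $V_R=a/R\cdot R+b\cos\gamma$, i.e.\ $a+b\cos\gamma=V_R$, up to the normalization in the statement. I expect some bookkeeping to be needed to match the authors' exact normalization of $\omega^2$.

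Third, I will treat the $\mathbf a$-equation. The free multiplier $\mu$ absorbs the component along $\mathbf a$. What remains is the condition that $k_1w\omega\mathbf e_3-V_2\mathbf q$ be parallel to $\mathbf a$, i.e.\ the cross product with $\mathbf a$ vanishes. Since all vectors lie in the $xz$-plane, this is the single scalar equation $k_1\omega w\,u-b(xw-zu)\cdot(\ldots)=0$, more precisely $k_1\omega\,wu=b\,(xw-zu)$. The quantity $xw-zu$ equals $-(\mathbf q\times\mathbf a)_y=\pm R\sin\gamma$, so this becomes a relation between $x$, $u$, $\gamma$ and $k_1$. Eliminating $\omega$ via its value from the second step should give the displayed constraint $R^2b^3x-au=c^4k_1/(Rm_{12}\sin\gamma)$. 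This elimination is the main obstacle: it is an algebraic identity in which the powers $b^3$ and $c^4$ must be matched carefully. My first move there will be to express everything in terms of $a,b,\gamma$ and clear the factor $c$.
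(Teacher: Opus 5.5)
Your route is the paper's route. You check that the vectors in \eqref{eq:RESuff} have invariants $(R^2,R\cos\gamma)$, note that the vertical component of the $\mathbf q$-equation vanishes identically, read $\omega$ off the horizontal component, and eliminate $\mu_1$ from the $\mathbf a$-equation. The gap is that the two decisive computations are only announced (``should reproduce the stated value'', ``should give the displayed constraint''). When you carry them out, they do not produce the displayed formulas.

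For the $\mathbf q$-equation, $\frac{a}{R}x+bu=\frac{1}{c}\bigl(a(a+b\cos\gamma)+b(b+a\cos\gamma)\bigr)=c$, so $\partial V/\partial\mathbf q=c\,\mathbf e_1$ exactly. The equation $m_{12}\omega^2x=c$ then forces $\omega^2=c/(m_{12}x)=c^2/\bigl(R\,m_{12}(a+b\cos\gamma)\bigr)$, not $c^4/\bigl(R^2m_{12}(a+b\cos\gamma)\bigr)$. No bookkeeping of normalizations closes this. The equilibrium equations are invariant under $V\mapsto\lambda V$, $\omega\mapsto\sqrt{\lambda}\,\omega$ (so $k_1\mapsto\sqrt{\lambda}\,k_1$ and $\mu_1\mapsto\lambda\mu_1$). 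Under this scaling the vectors \eqref{eq:RESuff} are unchanged and $a,b,c$ scale by $\lambda$. The value $c^2/(R\,m_{12}V_R)$ scales like $\lambda$, as it must, whereas the stated one scales like $\lambda^3$. The paper's own proof reaches $c^4/R^2$ through a slip: it uses $uz-wx=c^2\sin\gamma$. That is the determinant of the rescaled vectors $c\,\mathbf a$ and $\frac{c}{R}\mathbf q$; for $\mathbf a$ and $\mathbf q$ themselves, $uz-wx=R\sin\gamma$.

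Your third step fails in the same way. Theorem~\ref{theorem:Cond}(ii) gives the $\mathbf a$-equation as $\mu_1\mathbf a=k_1w\,\omega\,\mathbf e_3-b\,\mathbf q$ (the paper's \eqref{eq:REN2m1} writes $\omega^2k_1\mathbf e_3$ on the right instead). Taking the cross product with $\mathbf a$ gives $k_1\omega\,uw=b\,(zu-xw)$. You have the sign of the right-hand side reversed, and in fact $zu-xw=R\sin\gamma$ exactly. Since $uw=-a\sin\gamma\,(b+a\cos\gamma)/c^2$, for $\sin\gamma\neq0$ the condition is $k_1\omega\,a\,(b+a\cos\gamma)+b\,R\,c^2=0$, after which $\mu_1=-bx/u$ (when $u\neq0$).

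This is not the displayed relation $R^2b^3x-au=c^4k_1/(R\,m_{12}\sin\gamma)$. Its left side mixes terms of degree $3$ and $1$ in $(a,b)$, so it cannot follow from the scale-invariant equilibrium equations. A concrete check confirms this: $R=1$, $a=b=1$, $\gamma=\pi/2$ forces $k_1\omega=-2$, while the displayed relation would force $k_1=0$. Moreover, at $\gamma\in\{0,\pi\}$ the displayed relation is undefined, whereas your $\mathbf a$-equation holds trivially there with $\mu_1=-bx/u$.

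So your argument, completed honestly, proves a corrected statement: $\omega^2=c^2/(R\,m_{12}V_R)$ together with the compatibility condition $k_1\omega\,uw=bR\sin\gamma$. It cannot establish the theorem with the formulas as displayed, and the proposal should not promise that it will.
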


\begin{proof}
Considering the definition given for $\mathbf{q}$ and $\mathbf{a}$, a straightforward computation shows that in deed for the given $\mathbf{q}$ and $\mathbf{a}$
we have
$\mathbf{q}\cdot\mathbf{q}=R^2$ and  $\mathbf{q}\cdot\mathbf{a}=R\cos\gamma$ as required.

\begin{figure}[h]
\centering
\begin{tikzpicture}[>=Stealth, line cap=round, line join=round, scale=1.0]
  \coordinate (O) at (0,0);
  \coordinate (A) at (3.0,2.4);
  \coordinate (B) at (5.0,0);
  \coordinate (D) at ($(A)+(1.5,1.2)$); 

  \draw[thick,->] (O) -- (6.0,0) node[right] {$x$};
  \draw[thick,->] (O) -- (0,3.4) node[above] {$z$};

  \draw[thick,->] (O) -- (A)
    node[midway, left=2pt] {$2V_{1}\,\mathbf{q}$};

  \draw[thick,->] (A) -- (B)
    node[midway, right=2pt] {$V_{2}\,\mathbf{a}$};

  \draw[very thick,->] (O) -- (B)
    node[midway, below=4pt] {$2V_{1}\,\mathbf{q}+V_{2}\,\mathbf{a}$};

  \draw[dashed,thick] (A) -- (D);

  \pic[draw,->,angle radius=10mm,angle eccentricity=1.25,
       "$\pi-\gamma$"] {angle = O--A--B};

  \pic[draw,->,angle radius=15mm,angle eccentricity=1.20,
       "$\gamma$"] {angle = B--A--D};

\end{tikzpicture}

\caption{Vector triangle illustrating $2V_{1}\,\mathbf{q}$, $V_{2}\,\mathbf{a}$, and their resultant.}
\label{fig:tri1}
\end{figure}

For the case $N=2$, $m=1$, and keeping in mind that $\mathbf{q}=(x,0,z)$ and $\mathbf{a}=(u,0,w)$, the equations of motion \eqref{eq:RE3aux} read as follows 
\begin{gather}
\begin{aligned}
    \label{eq:REN2m1}
    2V_1\,\mathbf{q}+V_2\,\mathbf{a}=&\,\omega^2\,m_{12}(x,0,0),\\
    V_2\,\mathbf{q}+\mu_1\,\mathbf{a}=&\,\omega^2\,k_{1}(0,0,1).
\end{aligned}
\end{gather}
Each of the above equations defines a triangle in the $xz$-plane; henceforth, we restrict our analysis to this plane.

The first equation defines the triangle depicted in Fig.~\ref{fig:tri1}. In order to ensure that this equation is satisfied, we must determine the value of $\omega$ such that the resultant of the vectors on the left-hand side of the equation equals the right-hand side, i.e.~ends on the $x$-axis. To do so, we consider the vertical component of the first equation in \eqref{eq:REN2m1}; we obtain
$$\dfrac{a}{R}z+b\,w=0.$$
Hence, assuming $z\neq0$, we have $a/R=bw/z$, which we plug into the equation for the first component of \eqref{eq:REN2m1}, leading to
$$b(u\,z-w\,x)=\omega^2m_{12}x\,z.$$
Noting that $(u\,z-w\,x)$ is the determinant of the projection of the vectors $c\,\mathbf{a}$ and $c/R\,\mathbf{q}$ in the $xz$-plane,
$$(u\,z-w\,x)=\det(c\,\mathbf{a},c/R\,\mathbf{q})=c^2\sin\gamma,$$
we obtain that $b\,c^2\sin\gamma=\omega^2m_{12}\,{R}/{c}(a+b\cos\gamma)\,{R}/{c}(b\sin\gamma)$, which simplifies to the condition for $\omega$ stated in the theorem.
Since $\omega^2$ is positive we did require that $V_r = a + b \cos\gamma > 0$.
\begin{figure}[htbp]
\centering
\begin{tikzpicture}[>=Stealth, line cap=round, line join=round, scale=1.1]

  \coordinate (O) at (0,0);
  \coordinate (A) at (0,3.2);
  \coordinate (B) at (2,1.6);
  \coordinate (D) at (4,0);

  \draw[thick,->] (O) -- (5.2,0) node[right] {$x$};
  \draw[thick,->] (O) -- (0,4.0) node[above] {$z$};

  \draw[thick,->] (O) -- (A)
    node[midway, left=2pt] {$\omega^2 k_1\mathbf{e}_3$};

  \draw[thick,->] (B) -- (A)
    node[midway, above right=2pt] {$\mu_1 \mathbf{a}$};

  \draw[thick,->] (O) -- (B)
    node[midway, below right=1pt] {$V_2 \mathbf{q}$};

  \pic[draw,->,angle radius=10mm,angle eccentricity=1.25,
       "$\gamma$"] {angle = A--B--O};

  \draw[dashed,thick] (B) -- (D);

\end{tikzpicture}
\caption{Vector triangle associated with the second equation in \eqref{eq:REN2m1}.}
\label{fig:tri2}
\end{figure}

Another triangle depicted in Fig.~\ref{fig:tri2} and defined through the second equation in \eqref{eq:REN2m1} must be satisfied. This is done by proceeding in the same way than before with the equations for the horizontal and vertical components of the second equation in \eqref{eq:REN2m1}. In particular, from the horizontal component we obtain that
$$\mu_1=\dfrac{b+a\cos\gamma}{R\,b\,(a+b\cos\gamma)}.$$
Then, after some algebraic manipulations with the vertical component, we obtain the 
equation stated in the theorem. 

\end{proof}

\section*{Funding}
F. Crespo acknowledges support from internal start-up funds at Embry-Riddle Aeronautical University. Part of this work was carried out during a research visit to the Sydney Mathematical Research Institute, which provided financial support for the visit.

	\section*{Acknowledgment}
F.C. thanks the Sydney Mathematical Research Institute SMRI and especially Holger Dullin for their extraordinary hospitality and continued support during the research stay in the second semester of 2022, which allowed us to carry out this research. We thank E.A. Turner for his assistance with the figures in this work.
	
	\bibliographystyle{abbrv}
	\bibliography{Bibliography}
\end{document}